\newcommand{\comment}[1]{}
\newcommand{\ignorer}[1]{}
\def\and{\ \wedge\ }
\theoremstyle{plain}
\newtheorem{lemma}{Lemma}
\newtheorem{theorem}{Theorem}
\newtheorem{proposition}[lemma]{Proposition}
\newtheorem{definition}[lemma]{Definition}
\theoremstyle{remark}
\def\eps{\varepsilon}
\renewcommand\epsilon{\varepsilon}
\def\Kb{{K_3}}
\def\Kbb{{K_4}}
\def\Kab{{K'_2}}
\def\Kc{{K_5}}
\def\Kd{{K_6}}
\def\Kv{{K_7}}
\def\si{\sigma}
\def\ka{\kappa}
\def\la{\lambda}
\def\cI{\mathcal{I}}
\def\E{\mathbb{E}}
\def\bC{\mathbf{C}}
\def\bD{\mathbf{D}}
\def\One{\bm 1}
\newcommand{\MWST}[1]{\mathcal M(#1)}
\DeclareMathOperator{\Var}{Var}
\DeclareMathOperator{\Cov}{Cov}
\DeclareMathOperator{\Perm}{Perm}
\DeclareMathOperator{\loops}{loops}
\newcommand{\Old}[1]{}
\newcommand\restr[2]{{%
		\left.\kern-\nulldelimiterspace %
		#1 %
		\right|_{#2} %
	}}
\DeclareMathOperator{\cc}{cc}
\DeclareMathOperator{\CC}{CC}
\newcommand\esper{\mathbb E}
\title[Pattern counts in conjugacy classes]
{Asymptotic normality of pattern counts\\
in conjugacy classes}
 \author[V. Féray]{Valentin Féray}
       \address[VF]{Université de Lorraine, CNRS, IECL, F-54000 Nancy, France}
       \email{valentin.feray@univ-lorraine.fr}
 \author[M.S. Kammoun]{Mohamed Slim Kammoun}
    \address[MSK]{Université de Lyon, CNRS, ENSL, UMPA,  Lyon, France}
    \email{slim.kammoun@ens-lyon.fr}
     \keywords{permutations, patterns, asymptotic normality, dependency graphs, cumulants}
\subjclass[2020]{60C05,05A05}
\begin{document}
\maketitle
\begin{abstract}
    We prove, under mild conditions on fixed points and $2$-cycles,  
    the asymptotic normality of vincular pattern counts 
    for a permutation chosen uniformly at random in a conjugacy class. 
    Additionally, we prove that the limiting variance is always non-degenerate
for classical pattern counts.
    The proof uses weighted dependency graphs. 
\end{abstract}

\section{Introduction}

\subsection{Background}
A natural question in the context of random combinatorial structures
is the asymptotic behaviour of the number of substructures of a given type. In general, we expect such numbers to be asymptotically
normal or Poisson distributed, but proving it might be difficult.
For permutations, a good notion of substructures is that of {\em permutation patterns}.
Unsurprisingly, the asymptotic normality of patterns counts (either classical, consecutive or vincular patters)
in {\em uniform random permutations} has been extensively studied. 
We refer for example to \cite{BonaMonotonePatterns,JNZ15,Lisa,even-zohar2020patterns,janson2023normality} 
for such results. 

More recently, there has been some interest
in extending these results to other models of random permutations,
and in particular to uniform random permutations 
in some conjugacy classes.
In this spirit, asymptotic normality results have been obtained
for specific patterns:
see \cite{fulman1998descents-conjugacy,KimLee2020} for descent counts and \cite{fulman2022peaks} for the number of peaks 
(the latter is a sum of vincular pattern counts).
These results use minimal assumptions on the conjugacy class
under consideration.
In another direction, the second author \cite{kammoun2022universality} 
and independently Hamaker and Rhoades, \cite[Theorem 8.8]{hamaker} have both
proved asymptotic normality results for general patterns, 
but with some restrictions on the chosen conjugacy classes.
In the latter reference, the problem was raised of proving the asymptotic 
normality of pattern counts assuming only the convergence of the proportion of fixed points
and $2$-cycles in our permutations \cite[Problem 9.9]{hamaker}.
We solve this problem in the present paper; see \cref{thm:clt} below.
Additionally, we prove that the limiting Gaussian distribution is always non-degenerate
for classical pattern counts (\cref{thm:non-degeneracy}).
\medskip

{\em Note:} In parallel to this work, 
the asymptotic normality of {\em classical} pattern counts
 in random permutations in conjugacy classes
 has been obtained by Dubach \cite{dubach2024geometric}
by a different method.
His method only applies to classical patterns 
(and not to the general case of vincular patterns that we treat here), 
but can provide stronger results in this case (speed of convergence 
and large deviation estimates).
It can also be used to analyze other types of statistics on random 
permutations in conjugacy classes, such as longest monotone subsequences, the Robinson--Schensted shape,
and the number of records; all of these are out of reach with
the method of weighted dependency graphs used in this paper.

\subsection{Vincular patterns}
We use the notation $[n]$ to represent the set $\{1, 2, \ldots, n\}$.
 Additionally, we utilize the one-line notation for permutations
 $\sigma = \sigma_1 \sigma_2 \dots \sigma_n$.
\begin{definition}
  Let $\pi$ be a permutation of size $k$ and $A$ a subset of $[k-1]$.
  Then an {\em occurrence} of the vincular pattern $(\pi,A)$ in a permutation $\si$ is
  a tuple $(i_1,\cdots,i_k)$ with $i_1 < \cdots < i_k$ such that:
  \begin{itemize}
    \item $(i_1,\cdots,i_k)$ is an occurrence of $\pi$ in $\si$, i.e.    
      \[ \si_{i_{\pi^{-1}_1}} < \dots < \si_{i_{\pi^{-1}_k}}.\]
    \item for all $s$ in $A$, we have $i_{s+1}=i_s+1$.
  \end{itemize}
\end{definition}
In words, the subsequence $(\sigma_{i_1},\cdots,\sigma_{i_k})$
is in the same relative order as $(\pi_1,\dots,\pi_k)$ and $A$
indicates entries of the subsequence which must be consecutive
in $\sigma$.
For example, let us consider the permutation $\sigma=2\textcolor{blue}{17}345\textcolor{blue}{6}$ and the triple $(i_1, i_2, i_3)=(2,3,7)$
(the corresponding subsequence $(\sigma_{i_1}, \sigma_{i_2}, \sigma_{i_3})=(1,7,6)$ is in blue in $\sigma$).
\begin{itemize}
    \item The triple $(i_1, i_2, i_3)=(2,3,7)$ is an occurrence of $(\pi=132, \emptyset)$ since $\pi=132$ is the permutation with the same relative order as the subsequence $(\sigma_{i_1}, \sigma_{i_2}, \sigma_{i_3})=(1,7,6)$. Equivalently,
    $$\sigma_{i_{\pi^{-1}_1}} = \sigma_{i_1}= 1 < \sigma_{i_{\pi^{-1}_2}} = \sigma_{i_3} = 6 < \sigma_{i_{\pi^{-1}_3}} = \sigma_{i_2} = 7.$$

    \item The triple $(2,3,7)$ is also an occurrence of $(132, \{1\})$.
    Indeed, it is an occurrence of $132$ as explained before,
    and it satisfies $i_2=i_1+1$ (i.e.~the two first elements
    in the corresponding subsequence $(1,7,6)$ are adjacent in $\sigma$).

    \item The triple $(2,3,7)$ is however not an occurrence of $(132, \{2\})$ since $i_3 \neq i_2+1$; said otherwise, the elements $7$ and $6$ in
    the corresponding subsequence $(1,7,6)$ are not adjacent in $\sigma$.
\end{itemize}

Vincular patterns have been introduced in \cite{SteingrimssonGeneralizedPatterns}. 
They encompass the notions of classical and consecutive patterns. Indeed, when $A=\emptyset$, there are no constraints on indices being consecutive, and we recover the notion of classical patterns.
On the other hand, when $A=[k-1]$, all indices must be consecutive,
and we recover consecutive patterns.
Here are some examples of occurrences of vincular patterns in the literature.
\begin{itemize}
    \item An occurrence of $(21, \emptyset)$ is a classical occurrence of 21, {\em i.e.}~an inversion.
    \item An occurrence of $(21, \{1\})$ is a consecutive occurrence of 21, {\em i.e.}~a descent.
    \item An occurrence of $(123\dots k, \emptyset)$ is an increasing subsequence of length $k$.
    \item A peak is an occurrence of  $(132, \{1,2\})$ or $(231, \{1,2\})$. 
    Symmetrically, a valley is an occurrence of $(213, \{1,2\})$ or $(312, \{1,2\})$.
    \item The longest alternating subsequence of a permutation equals, up to an additive term equal to 1 or 2, the number of valleys, plus the number of peaks of the permutation. In other terms,
    it is almost a linear combination of vincular patterns (in that case, consecutive patterns).
     This result and some equivalent variants were obtained separately by different authors. We refer for example to Houdré and Restrepo~\cite{houdre2010} and Romik~\cite{romik2011alternating}.
     \item The generating function of the vincular pattern $(312,\{1\})$ 
     (together with other statistics) occurs as partition function of
     a model of statistical physics called PASEP (partially asymmetric simple exclusion process); see \cite{PASEP_31-2}.
\end{itemize}

\subsection{Our main result: asymptotic normality of vincular pattern counts
for uniform random permutation in conjugacy classes}
 Throughout this article, we fix a vincular pattern $(\pi,A)$ such that $\pi$ is of size $k$.
We let  $X^{(\pi,A)}(\sigma)$ be the number of the occurrences of $(\pi,A)$ in $\sigma$ and  we define $m$ by
$m=k-|A|$, which is the number of {\em blocks} in $(\pi,A)$. 
For an (integer) partition $\la$ of $n$, we let $\mathcal C_{\la}$ be the conjugacy class 
indexed by $\la$ in the symmetric group $S_n$, and  $\sigma_{\la}$ be a uniform random permutation in $\mathcal C_{\la}$. 
Our main result is the following statement,
where $m_i(\lambda^n)$ denotes the number of parts of size $i$ in $\lambda^n$.
\begin{theorem}\label{thm:clt}
   For any vincular pattern $(\pi,A)$,
   there exist two polynomial functions  $f$ and $g$  such that if $\frac{m_1(\la^{n})}{n} \to p_1 $  and $\frac{m_2(\la^{n})}{n} \to p_2 $, then  
    \begin{equation}\label{eq:clt}
    \frac{X^{(\pi,A)}(\sigma_{\la^n}) -
       \E(X^{(\pi,A)}(\sigma_{\la^n}))
        }{n^{m-\frac 12} } 
      \xrightarrow[n\to\infty]{d}
       \mathcal{N}(0,f(p_1)+p_2 g(p_1)).
       \end{equation}
       Moreover, we have convergence of all moments.
\end{theorem}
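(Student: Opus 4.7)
The plan is to apply the method of weighted dependency graphs, as announced in the abstract. Writing
\begin{equation*}
X^{(\pi,A)}(\sigma_{\la^n}) = \sum_{\mathbf{i} \in I_n^{(A)}} Y_{\mathbf{i}}(\sigma_{\la^n}),
\end{equation*}
where $I_n^{(A)}$ consists of the tuples $1 \le i_1 < \cdots < i_k \le n$ satisfying the adjacency constraints imposed by $A$, and $Y_{\mathbf{i}}$ is the indicator that $(i_1,\dots,i_k)$ is an occurrence of $\pi$ in $\sigma_{\la^n}$, one has $|I_n^{(A)}| = \Theta(n^m)$ since $(\pi,A)$ has $m$ blocks; this matches the normalization $n^{m-1/2}$ in~\eqref{eq:clt}.

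The central step is to endow the family $(Y_{\mathbf{i}})_{\mathbf{i} \in I_n^{(A)}}$ with a weighted dependency graph tailored to uniform random permutations in $\mathcal C_{\la^n}$. Edges and weights must encode the two sources of correlation between tuples $\mathbf{i}$ and $\mathbf{j}$: sharing positions in $[n]$, and interacting through cycles of $\sigma$ that link their positions. For $\sigma$ uniform in $\mathcal C_{\la^n}$, the joint law of $(\sigma(i_1),\dots,\sigma(i_s))$ is controlled by the partial cycle structure induced on $\{i_1,\dots,i_s\}$, and the aim is to translate this information into a cumulant bound of the form
\begin{equation*}
|\kappa(Y_{\mathbf{i}_1},\dots,Y_{\mathbf{i}_r})| \le C_r \max_T \prod_{e \in T} \psi(e),
\end{equation*}
where $T$ ranges over spanning trees of the subgraph induced on $\{\mathbf{i}_1,\dots,\mathbf{i}_r\}$. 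Once this is established, summing over tuples shows that the $r$-th cumulant of $X^{(\pi,A)}(\sigma_{\la^n})$ is $O(n^{rm - r + 1})$, so the general CLT for weighted dependency graphs yields convergence to a Gaussian together with convergence of all moments.

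The hard part is proving the cumulant estimate itself, because the $Y_{\mathbf{i}}$'s fail to be weakly dependent in the classical sense: conditioning on the values of $\sigma$ at a few positions constrains the entire cycle structure, and this global effect must be absorbed into the weights. The natural approach is to expand each joint cumulant as a signed sum over set partitions, decompose each term according to the partition of $\bigcup_j \mathbf{i}_j$ induced by cycles of $\sigma$, and exploit the hypotheses $m_1(\la^n)/n \to p_1$ and $m_2(\la^n)/n \to p_2$ to control the contributions in which fixed points and $2$-cycles play a distinguished role. The explicit form $f(p_1) + p_2 g(p_1)$ of the limiting variance is then read off a direct second-moment computation: contributions of cycles of length $\ge 3$ collapse (using the relation $\sum_{i\ge 1} i\, m_i(\la^n)/n = 1$) into a polynomial in $p_1$ and $p_2$, the dependence in $p_2$ turning out to be linear because at most one position of each tuple can be matched to a $2$-cycle without reducing the effective dimension of the underlying sum.
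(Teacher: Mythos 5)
Your plan correctly identifies the weighted dependency graph framework as the engine of the proof, and you correctly state the target cumulant bound $\kappa_r = O(n^{rm-r+1})$. But the way you propose to carry it out is missing the paper's central idea, and as written it does not substantiate the hard part.

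You propose to build a weighted dependency graph directly on the pattern-occurrence indicators $Y_{\mathbf{i}}$ (roughly $n^m$ of them), and to establish the required cumulant bound by expanding cumulants as signed sums over set partitions and sorting terms by the induced cycle structure. This is a genuinely different decomposition from the paper's, which writes $X^{(\pi,A)}$ as a double sum over positions \emph{and values}, $\sum B_{i_1,j_1}\cdots B_{i_k,j_k}$ with $B_{i,j}=\mathbbm{1}[\sigma(i)=j]$, and constructs the weighted dependency graph at the level of the elementary variables $B_{i,j}$ (Theorem~\ref{prop:WDG_conjugacy}). The crucial step you do not mention is the representation $\sigma_\lambda = \tau^{-1}\rho\tau$ with $\tau$ a \emph{uniform} permutation, which converts $B_{i,j}$ into $\sum_k A_{i,k}A_{j,\rho_k}$ and thereby reduces the cumulant estimate to the already-known weighted dependency graph for uniform permutations (Proposition~\ref{prop:WDG_uniform_permutations}) via the stability/power property (Proposition~\ref{prop:WDG_Products}), plus some delicate but elementary graph-counting. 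Without some such reduction, the sentence ``expand the joint cumulant as a signed sum over set partitions and decompose by cycle structure'' is a wish rather than an argument: the moments of $Y_{\mathbf{i}}$ involve order statistics of the images $\sigma(i_1),\dots,\sigma(i_k)$, which are far less tractable than moments of the $B_{i,j}$'s, and there is no obvious candidate for the edge weights and the function $\Psi$ at the level of the $Y_{\mathbf{i}}$'s that would make the bound close. This is the genuine gap.

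Two smaller points. First, you propose to derive the variance formula $f(p_1)+p_2 g(p_1)$ by a direct second-moment computation; this is correct in principle but unnecessary — the paper simply cites it from Hamaker--Rhoades (Proposition 7.2 and Theorem 8.14 of \cite{hamaker}), and only the cumulant bounds for $r\ge 3$ require new work. Second, after establishing the cumulant bound you need a slightly more careful sentence: the bound gives $\kappa_r(X)/n^{r(m-1/2)} = O(n^{1-r/2})\to 0$ for $r\ge 3$, and the variance convergence is what supplies the Gaussian limit (possibly degenerate); stating ``the general CLT yields convergence'' glosses over the fact that one also needs the variance asymptotics to be of the matching order $n^{2m-1}$, which again comes from \cite{hamaker}.
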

We prove  \cref{thm:clt} in \cref{sec:proof clt}. 
A difficulty, compared to the case of uniform random permutations in the whole symmetric
group $S_n$, is that, when working in a given conjugacy class,
occurrences of patterns at disjoint sets of positions are no longer independent events.
 Our proof thus relies then on a technique known as weighted dependency graphs introduced in \cite{WDG} in order to control higher cumulants for sums of random dependent variables. 
The main ingredient is \cref{prop:WDG_conjugacy} (proved in \cref{sec:WDG}),
 which states the existence of a weighted dependency graph structure 
 for  the random permutation $\sigma_{\lambda^n}$. 
 Both the proofs of \cref{prop:WDG_conjugacy} and of  \cref{thm:clt},
 starting from \cref{prop:WDG_conjugacy}, require delicate combinatorial
 arguments, to bound complicated multi-indexed sums.
 
We believe that \cref{prop:WDG_conjugacy} is interesting in itself. 
It can be used to prove that other classes 
of permutation statistics are asymptotically normal: for example, it is immediate to adapt the proof of \cref{thm:clt}
to the number of excedances\footnote{An excedance is a position $i$ such that  $\sigma(i)>i$.}, peaks and/or valleys
(and thus to the length of the longest alternating subsequence).
Also, the proof is readily adapted to establish joint convergence in \cref{thm:clt},
i.e.~ the limiting joint distribution of
$$\left(\frac{X^{(\pi^1,A^1)}(\sigma_{\la^n}) -
       \E(X^{(\pi^1,A^1)}(\sigma_{\la^n}))
        }{n^{m_1-\frac 12} },
   \dots, \frac{X^{(\pi^r,A^r)}(\sigma_{\la^n}) -
       \E(X^{(\pi^r,A^r)}(\sigma_{\la^n}))
        }{n^{m_r-\frac 12} } \right)
$$ is a multivariate Gaussian distribution for any family of patterns. 

\subsection{Non-degeneracy of the limiting Gaussian law for classical patterns}
A natural problem following \cref{thm:clt} is to determine whether
the limiting law is degenerate or not, i.e.~whether the limiting variance $f(p_1)+p_2 g(p_1)$
is positive or equal to $0$.
In the case of a uniform permutation in $S_n$, which corresponds\footnote{Uniform random permutations in the whole symmetric group $S_n$ are of course not a special case of random permutations in conjugacy classes. However, it follows easily from the results of \cite[Section 8]{hamaker} that the limiting variance for uniform random permutations is the same as for any conjugacy class with proportions of fixed points and $2$ cycles tending to $0$.} 
to $p_1=p_2=0$, it is known that the limiting variance, $f(0)$ in this case, 
is always positive; see \cite{Lisa,janson2023normality}.
Since $f$ and $g$ are polynomial functions, 
this implies that $f(p_1)+p_2 g(p_1)$ can only vanish 
when $(p_1,p_2)$ lives in a subvariety of codimension 1 of $\{(p_1,p_2) \in \mathbb R^2_+:\,  p_1+2p_2 \le 1\}$.
In other terms, we know that the limiting variance is generically positive.
It would nevertheless be desirable to have a complete characterization
of the degeneracy cases.

We could solve the question in the case of classical patterns, 
i.e.~the case $A=\emptyset$, 
for which we prove that the limiting Gaussian law is always non-degenerate
(except in the trivial case $p_1=1$, 
where most points of the permutation are fixed).
For simplicity, we write $\pi$ instead of $(\pi,\emptyset)$. Our second main result is the following.
\begin{theorem}
  For any {\bf classical} pattern $\pi$ and any
  $(p_1,p_2) \in \mathbb R_+^2$ with $p_1+2p_2 \le 1$, except $(p_1,p_2)=(1,0)$,
  we have $f(p_1) +p_2 g(p_1) >0$.
  \label{thm:non-degeneracy}
\end{theorem}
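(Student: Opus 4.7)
The plan hinges on two elementary observations about the limiting polynomial $P(p_1,p_2):=f(p_1)+p_2 g(p_1)$. First, since $P$ is the limit of the non-negative quantities $n^{-(2k-1)}\Var\bigl(X^\pi(\sigma_{\lambda^n})\bigr)$, we already know $P\ge 0$ on the admissible triangle $T:=\{(p_1,p_2)\in\R_+^2:p_1+2p_2\le 1\}$. Second, $P$ is \emph{affine} in $p_2$: for fixed $p_1<1$, its restriction to the segment $p_2\in\bigl[0,(1-p_1)/2\bigr]$ is a convex combination of its values at the two endpoints. Consequently, strict positivity of $P$ on $T\setminus\{(1,0)\}$ reduces to proving, for every $p_1\in[0,1)$, the two strict inequalities
\begin{equation*}
  f(p_1)>0 \qquad\text{and}\qquad f(p_1)+\tfrac{1-p_1}{2}\,g(p_1)>0.
\end{equation*}
The excluded point $(1,0)$ is precisely the place where this $p_2$-segment collapses to a single point, and where $P$ is allowed to vanish.

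Each of the two endpoint inequalities is realized by a concrete sequence of conjugacy classes: $\lambda^n=(n-\lfloor p_1 n\rfloor,\,1^{\lfloor p_1 n\rfloor})$ (a single long cycle plus $\lfloor p_1 n\rfloor$ fixed points) for the first, and $\lambda^n=(2^{(n-\lfloor p_1 n\rfloor)/2},\,1^{\lfloor p_1 n\rfloor})$ (an involution with $\lfloor p_1 n\rfloor$ fixed points and the rest paired) for the second. In each case, we must establish the sharp lower bound $\Var\bigl(X^\pi(\sigma_{\lambda^n})\bigr)=\Omega(n^{2k-1})$. The plan is to exploit the weighted dependency graph structure of \cref{prop:WDG_conjugacy}, which expresses $\Var(X^\pi)$ as an explicit sum of covariances of pattern indicators, and to isolate its leading asymptotic term. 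A more tractable alternative is to introduce a carefully chosen auxiliary statistic $Y_n$ and apply Cauchy--Schwarz,
\begin{equation*}
  \Var\bigl(X^\pi(\sigma_{\lambda^n})\bigr)\ \ge\ \frac{\Cov\bigl(X^\pi(\sigma_{\lambda^n}),Y_n\bigr)^2}{\Var(Y_n)},
\end{equation*}
where both moments on the right-hand side are computed via \cref{prop:WDG_conjugacy}. Good candidates for $Y_n$ are shorter pattern counts $X^{\pi'}$ for a sub-pattern $\pi'$ of $\pi$, or simple linear statistics such as $\#\{i\in[n]:\sigma(i)>i\}$; in the uniform case $p_1=p_2=0$, such statistics have already been used successfully in the literature, and we expect to adapt these arguments to the present setting.

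The main obstacle is expected to be the involution endpoint, where the constraint $\sigma^2=\mathrm{id}$ creates extra rigidities that can make naive choices of $Y_n$ decouple from $X^\pi$ at leading order. For instance, the counts of $12$- and of $21$-patterns in an involution are essentially in bijection once the fixed-point set is fixed, so the test statistic will have to be tailored to the particular pattern $\pi$ --- possibly with a case split according to whether $\pi=\pi^{-1}$, or through observables indexed by pairs of positions lying inside the same $2$-cycle. A useful consistency check throughout is that any lower bound we produce must degenerate precisely at $(p_1,p_2)=(1,0)$ and scale correctly in the parameters $(1-p_1)$ and $p_2$; this pins down the target order $n^{2k-1}$ and will rule out most naive candidates for $Y_n$.
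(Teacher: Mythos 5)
Your affine-in-$p_2$ reduction is a genuinely clever observation that the paper does not use: since $P(p_1,p_2)=f(p_1)+p_2\,g(p_1)$ is a nonnegative polynomial affine in $p_2$, it suffices to treat the two endpoints $p_2=0$ and $p_2=(1-p_1)/2$, and this could in principle simplify the analysis. However, the heart of the matter --- the variance lower bound itself --- is only a plan in your write-up, not a proof. No concrete auxiliary statistic $Y_n$ is exhibited, the covariance $\Cov(X^\pi,Y_n)$ is never bounded from below (which is exactly the hard, pattern-dependent part), and you explicitly acknowledge that the involution endpoint may defeat naive choices of $Y_n$ without resolving it. Cauchy--Schwarz also demands the sharp scaling $\Var(X^\pi(\sigma_{\lambda^n}))=\Omega(n^{2k-1})$ because the denominator $\Var(Y_n)$ eats up a factor of the right order; as stated, the proposal aims for a stronger bound than is actually necessary, which raises the bar further.

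The paper's argument sidesteps the choice of $Y_n$ entirely by exploiting a recursive \emph{construction} of $\sigma_\lambda$: remove a $2$-cycle (or $3$-cycle if $p_2=0$), pick its positions $(i,j)$ uniformly, and fill the rest with an independent $\sigma_{\lambda'}$. The law of total variance then gives the elementary lower bound $v(\lambda)\ge\Var[\,\esper(C\mid i,j)\,]$, where $C$ counts occurrences through $i$ or $j$. The content is concentrated in \cref{lem:varphi_non_constant}, which shows the one-point conditional pattern density $\varphi^\pi_{p_1}(x,y)+\varphi^\pi_{p_1}(y,x)$ is non-constant when $p_1<1$; this yields $\Var[\,\esper(C\mid i,j)\,]\gtrsim n^{2k-2}$, which is not yet enough. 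The decisive second step is to iterate the recursion inequality \eqref{eq:RecursiveInequality} roughly $\sqrt{n}$ times, boosting the bound to $n^{2k-3/2}$ --- and the paper observes that any lower bound growing faster than $n^{2k-2}$ already contradicts $f(p_1)+p_2g(p_1)=0$ in view of \eqref{eq:VarianceHamaker}. So the paper needs a strictly weaker variance estimate than what you target, obtained by a completely different decomposition. Your endpoint reduction would be a nice addition, but without an actual computation of the covariance for a specific $Y_n$ (including the involution regime you yourself flag as problematic), the proposal has a genuine gap.
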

Note that in the case $p_1=1$, $p_2=0$, we can choose $\la^n=(1^n)$, 
i.e.~$\sigma_{\lambda^n}$ is a.s.~the identity permutation.
Thus, in this case, the limiting variance $f(1)$ is indeed $0$. 

The proof of this non-degeneracy statement is rather involved.
It uses a method introduced in
\cite{Lisa} and \cite{WDG_SetPartitions}.
The basic idea is to use a recursive construction of the random objects,
say $\sigma_n$, under consideration.
 Typically, such a recursive construction uses 
 a random object of smaller size, say $\sigma_{n-1}$,
 and an extra source of randomness, say $I$.
Then one can compute the variance of the quantity of interest, $X(\sigma_n)$,
 by conditioning on $I$ and using the law of total variance.
If the recursive construction is well-chosen,
then some terms in the law of total variance involve the variance of 
$X(\sigma_{n-1})$, {\it i.e.}, one can compute, or at least bound from below,
the variance recursively.
This is the basic principle, the details being quite subtle;
 see \cref{sec:non-degeneracy}.

\section{Preliminaries: weighted dependency graphs}
\label{sec:preliminaries-WDG}
In this section, we present the notion of weighted dependency graphs,
which is the main tool in the proof of \cref{thm:clt}.
The results presented in this section are taken from~\cite{WDG}.

\subsection{Definition}
\label{ssec:def_WDG}
We start by recalling the notion of mixed cumulants.
The mixed cumulant of a family of random variables $X_1,X_2,\ldots,X_r$ 
defined on the same probability space and having finite moments
is defined as
\begin{equation}
  \kappa(X_1,X_2,\ldots,X_r):=[t_1 \cdots t_r] \log \mathbb E[\exp(t_1 X_1 + \dots + t_r X_r) ].
  \label{eq:def_cumulants}
\end{equation}
If $X_i=X$ for all $i$, we abbreviate $\kappa(X_1,X_2,\ldots,X_r)$ as $\kappa_r(X)$.
This is the standard cumulant of a random variable.

An important property of cumulants is the following: if $\{X_1,X_2,\ldots,X_r\}$ can be written
as a disjoint union of two mutually independent sets 
of random variables, then $\kappa(X_1,X_2,\ldots,X_r)=0$.
Hence, cumulants can be seen as a kind of measure of dependency,
and the definition of weighted dependency graphs is based on these heuristics. Another important and elementary property of cumulants is that  $X$ follows the  normal distribution if and only if $\kappa_r(X)=0$ for every $r\geq 3$. 
\medskip

In the sequel, a weighted graph is a graph with weights on its {\em edges}, belonging to $(0,1]$. 
Non-edges can be interpreted as edges of weight $0$, so that a weighted graph can be equivalently seen
as an assignment of weights in $[0,1]$ to the edges of the complete graph.
All our definitions are compatible with this convention.

For a weighted graph $H$, we define  $\MWST{H}$
to be the {\em maximal weight of a spanning tree} of $H$,
the weight of a spanning tree being the product of the weights of its edges
(if $H$ is disconnected, there is no spanning tree
and as a consequence of the above convention, we have $\MWST{H}=0$).
For example, for $\eps<1$, the weighted graph $H$ of \cref{fig:exWDep}
satisfies $\MWST{H}=\eps^4$.
The following definition was introduced in \cite{WDG}.
\begin{figure}[ht]
    \begin{center}
        \includegraphics{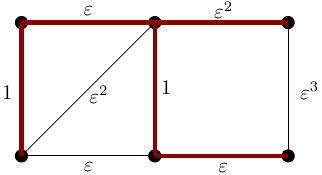}
    \end{center}
    \caption{Example of a weighted graph $H$
    with a marked spanning tree of maximum weight in red.}
    \label{fig:exWDep}
\end{figure}
\begin{definition}
\label{Def:WDG} Let $V$ be a finite set and
let $\{Y_v,v \in V\}$ be a family of random variables with finite moments, defined on the same probability space.
We fix a sequence $\bC=(C_1,C_2,\cdots)$ of positive real numbers
and a real-valued function  $\Psi$ defined on multisets of elements of $V$.

A weighted graph $L$ with vertex set $V$ is a $(\Psi,\bC)$ weighted dependency graph
for  $\{Y_v,v \in V\}$ if, 
for any multiset \hbox{$U=\{v_1,\ldots,v_r\}$} of elements of $V$,
one has
\begin{equation}
    \bigg| \ka\big( Y_v ; v \in U \big) \bigg| \le
    C_r \, \Psi(U) \, \MWST{L[U]}. 
    \label{EqFundamental}
\end{equation}
\end{definition}
Here, $L[U]$ is the weighted graph with vertex set $U$ and the same weights as $L$.
In examples of weighted dependency graphs, 
$\Psi$ and $\bC$ are simple or universal quantities,
so that the meaningful term is $\MWST{L[U]}$.
Note that the smaller the weights on edges are,
the smaller $\MWST{L[U]}$ is, 
{\it i.e.}~the closer to independence are the corresponding random variables $\{Y_v,v \in U\}$.
Hence, the edge weights in a weighted dependency graph
quantify in some sense the dependency between the random variables $\{Y_v,v \in V\}$.

\subsection{A useful example: uniform random permutations in $S_n$}
\label{sec:WDG_uniform_permutation}
Let $n \ge 1$ be an integer, and $\tau$ be a uniform random permutation in the symmetric group $S_n$.
For $(i,j)$ in $[n]^2$, we let $A_{i,j}=\One[\tau_i=j]$.
Note that variables $A_{i,j}$ and $A_{k,\ell}$ cannot be both equal to 1 if $(i,j) \ne (k,\ell)$
but either $i=k$ or $j=\ell$. 
If $i\ne k$ and $j \ne \ell$, we have
\[\mathbb P[A_{i,j} A_{k,\ell}]=\frac{1}{n(n-1)},\]
which is close but different from $\mathbb P[A_{i,j}] \cdot \mathbb P[A_{k,\ell}]=1/n^2$.
Hence, variables $A_{i,j}$ and $A_{k,\ell}$ are weakly dependent.
It turns out that this family
of random variables admits a nice weighted dependency graph.
This was shown in \cite[Proposition 8.1]{WDG},
which we copy here.
(We use the symbol $\#$ for the number of distinct elements in a set or multiset.)
\begin{proposition}\label{prop:WDG_uniform_permutations}
    Consider the weighted graph $L$ on vertex set $[n]^2$ defined as follows:
    \begin{itemize}
        \item if two pairs $v_1=(i_1,j_1)$ and $v_2=(i_2,j_2)$
            satisfy either $i_1=i_2$ or $j_1=j_2$,
            then they are linked in $L$ by an edge of weight $1$.
        \item otherwise,
            they are linked in $L$ by an edge of weight $1/n$.
    \end{itemize}

    Then $L$
    is a $(\Psi,\bC)$ weighted dependency graph,
    for the family $\{ Y_{i,j},\, (i,j) \in [n]^2 \}$,
    where
    \begin{itemize}
        \item $\Psi(U)=n^{-\#(U)}$ for any multiset $U$ of elements of $[n]^2$;
        \item $\bC=(C_r)_{r \ge 1}$ is a sequence that does not depend on $n$.
    \end{itemize}  
  \end{proposition}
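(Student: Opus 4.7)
The plan is to start from the Leonov--Shiryaev inversion formula, which expresses the joint cumulant as an alternating sum over set partitions of $[r]$ of joint moments of the $A_v$'s, and then to compute those moments explicitly from the uniform distribution on $S_n$. The crucial identity is
\begin{equation*}
  \mathbb{E}\Bigl[\prod_{v\in D}A_v\Bigr]
  \;=\; \frac{(n-|D|)!}{n!}\,\One\bigl[D\text{ is a partial permutation matrix}\bigr],
\end{equation*}
valid for any subset $D\subseteq[n]^2$ of distinct pairs, where ``partial permutation matrix'' means no two pairs in $D$ share a row or a column. This follows from the fact that each $A_v$ is a $\{0,1\}$-indicator combined with the elementary formula $\mathbb{P}[\tau_{i_1}=j_1,\dots,\tau_{i_k}=j_k]=(n-k)!/n!$. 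Substituting into Leonov--Shiryaev yields an explicit finite expression for $\kappa(A_v;v\in U)$ as a weighted sum indexed by set partitions of $[r]$ and by the associated supports $D_B$.

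Next, I would fix the support $\{u_1,\dots,u_s\}$ of $U$ and let $c$ denote the number of components of the support under the ``share a coordinate'' relation, so that the target bound reads $\Psi(U)\cdot\MWST{L[U]}=n^{-s}\cdot n^{-(c-1)}$. Expand each Pochhammer ratio as a Laurent series $(n-k)!/n!=n^{-k}(1+O(1/n))$ and substitute it back. The leading contribution, obtained by keeping only $n^{-|D_B|}$ and setting every indicator to $1$, vanishes identically thanks to the M\"obius identity $\sum_\pi(-1)^{|\pi|-1}(|\pi|-1)!=0$ on the partition lattice of any set with at least two elements; this is the combinatorial reflection of the heuristic that if the $A_v$'s were genuinely independent, their cumulants of order $\ge 2$ would vanish. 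The non-zero part of $\kappa(A_v;v\in U)$ therefore comes from two types of corrections: subleading terms in the Laurent expansion, each costing a factor $1/n$, and failures of the indicator on blocks that contain two pairs sharing a coordinate.

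The main obstacle is the combinatorial bookkeeping required to show that, after the M\"obius cancellations, each surviving term is of order at most $n^{-(s+c-1)}$ and that the number of surviving terms is bounded by a constant depending only on $r$, thereby yielding universal constants $C_r$. The rough intuition is that any non-vanishing term must accumulate at least $s-1$ corrections to tie the support together, and at least $c-1$ of these must come from the Laurent expansion (since an indicator failure only fuses pairs that already share a coordinate, hence cannot bridge two distinct components), reproducing the structure of a maximum-weight spanning tree in $L[U]$ whose $c-1$ inter-component edges each contribute the weight $1/n$. Making this heuristic precise---organizing the surviving terms, bounding each of them, and counting them---is the delicate combinatorial heart of the argument, and it is precisely this interplay between partition structure, multiplicities in $U$, and the component decomposition of the support that the notion of weighted dependency graph is designed to encode.
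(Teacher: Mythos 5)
This proposition is not proved in the paper at all: it is quoted verbatim from \cite{WDG} (Proposition~8.1 there), where it appears with a full proof. There is therefore no in-paper argument to compare your attempt against; I can only assess your sketch on its own terms.

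Your high-level plan --- Leonov--Shiryaev inversion over the partition lattice, the exact moment formula $\mathbb{E}[\prod_{v\in D}A_v]=\frac{(n-\#D)!}{n!}\,\One[\text{$D$ is a partial permutation matrix}]$, leading-order cancellation via the M\"obius identity, and bounding the residue --- is the right strategy and matches the approach used in \cite{WDG}. But you explicitly stop at the ``delicate combinatorial heart'' without resolving it, and two concrete issues stand in the way of turning the sketch into a proof. First, the identity $\sum_\pi(-1)^{|\pi|-1}(|\pi|-1)!=0$ kills the putative leading term only when the exponent $\sum_{B\in\pi}\#D_B$ is the same for every $\pi$, which requires $U$ to have no repeated elements; if $v_i=v_j$ for some $i\ne j$, this exponent becomes $\pi$-dependent and the naive cancellation fails, so multiplicities must be handled separately (for instance by first reducing to distinct indices via $A_v^2=A_v$, which itself shuffles the cumulant structure). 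Second, the assertion that ``at least $c-1$ of these corrections must come from the Laurent expansion'' is precisely the statement to be proved, not an observation: the partitions killed by the indicator are exactly the ones whose absence disrupts the global M\"obius cancellation, and showing that the resulting residue, after partial cancellation among the surviving partitions, is $O(n^{-(s+c-1)})$ with $n$-independent constants requires organizing the surviving partitions by how their blocks straddle the components of $L[U]$ and tracking orders carefully. The $r=3$ case with two pairs sharing a coordinate and a third isolated already shows that the cancellations are not term-by-term but global. So the plan is sound and correctly identifies the mechanism, but the substantive combinatorial work remains undone.
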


\subsection{Cumulant bounds}
Weighted dependency graphs are useful to bound cumulants and prove asymptotic normality results.
In particular, in this paper, we will use the following lemma \cite[Lemma 4.10]{WDG}
\begin{lemma}
    Let $L$ be a $(\Psi,\bC=(C_r)_{r\geq 1})$ weighted dependency graph
    for a family of variables $\{Y_v, v \in V\}$.
    Define $R$ and $T_\ell$ (for $\ell \ge 1$) as follows:
\begin{align}
    R &= \sum_{v \in V} \Psi(\{v\});
    \label{EqDefR}\\
    \label{EqDefT}
    T_\ell &= \max_{\substack{v_1,\ldots,v_{\ell} \in V}} \left[ \sum_{u \in V} 
    \left( \max_{i \in [\ell]} w(u,v_i) \right) \cdot
    \frac{\Psi\big(\{v_1,\cdots,v_{\ell},u\}\big)}{\Psi\big(\{v_1,\cdots,v_{\ell} \}\big)}   \right],
\end{align}
where $w(u,v)$ is the weight of the edge $(u,v)$ in $L$. 
    Then, for $r \ge 1$, we have
    \[ \left| \ka_r \left( \sum_{v \in V} Y_v \right) \right|
    \le   C_r\, r! \, R\, T_1 \cdots T_{r-1}.   \] 
    \label{lem:BorneCumulant}
\end{lemma}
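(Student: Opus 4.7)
The plan is to combine the multilinearity of cumulants with the dependency graph hypothesis \eqref{EqFundamental}, and then control the resulting sum by a two-stage combinatorial argument: first passing from $\MWST{\cdot}$ to a sum indexed by permutations via BFS orderings of the maximum spanning tree, then peeling off variables one at a time through the definition of $T_\ell$.

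By multilinearity of cumulants one has $\kappa_r\bigl(\sum_{v} Y_v\bigr) = \sum_{(v_1,\ldots,v_r) \in V^r} \kappa(Y_{v_1},\ldots,Y_{v_r})$, so that applying \eqref{EqFundamental} to each joint cumulant reduces the task to bounding
\[
S_r := \sum_{(v_1,\ldots,v_r) \in V^r} \Psi(\{v_1,\ldots,v_r\})\, \MWST{L[\{v_1,\ldots,v_r\}]}.
\]

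The key observation is that for any tuple, a maximum-weight spanning tree $T^*$ of $L[\{v_1,\ldots,v_r\}]$ admits a BFS traversal, namely an ordering $\sigma \in \Sn_r$ with $\sigma(1)=1$ such that the $T^*$-parent of $v_{\sigma(i)}$ lies in $\{v_{\sigma(1)},\ldots,v_{\sigma(i-1)}\}$ for each $i \ge 2$. Along any such $\sigma$,
\[
\MWST{L[\{v_1,\ldots,v_r\}]} = \prod_{i=2}^{r} w\bigl(v_{\sigma(i)},\, \text{parent in } T^*\bigr) \le \prod_{i=2}^{r} \max_{j < i} w(v_{\sigma(i)}, v_{\sigma(j)}).
\]
Since the left-hand side does not depend on $\sigma$ while at least one such BFS ordering exists, the maximum over $\sigma$, and a fortiori the sum over all $\sigma \in \Sn_r$, upper bounds $\MWST{\cdot}$. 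Plugging this into $S_r$, swapping sums, and relabeling $u_i := v_{\sigma(i)}$ in each fixed-$\sigma$ term, one observes that each resulting sum equals the same sum over $(u_1,\ldots,u_r) \in V^r$, which produces a factor $r!$:
\[
S_r \le r! \sum_{(u_1,\ldots,u_r) \in V^r} \Psi(\{u_1,\ldots,u_r\}) \prod_{i=2}^{r} \max_{j < i} w(u_i, u_j).
\]

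Finally, I would peel off variables from innermost to outermost. For fixed $(u_1,\ldots,u_{\ell-1})$, the innermost sum over $u_\ell$ is bounded by $T_{\ell-1}\, \Psi(\{u_1,\ldots,u_{\ell-1}\})$, which is exactly the content of the definition of $T_{\ell-1}$. Iterating this for $\ell = r, r-1, \ldots, 2$ produces the product $T_{r-1} \cdots T_1$, and the outermost sum $\sum_{u_1} \Psi(\{u_1\}) = R$ yields $S_r \le r!\, R\, T_1 \cdots T_{r-1}$, whence the stated inequality. The main obstacle lies in the intermediate step: passing from $\MWST{\cdot}$ to an ordered product amenable to $T_\ell$-peeling. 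The BFS-order trick ensures that every tree edge appears as a (child, parent) pair with the parent preceding the child, so that its weight is dominated by $\max_{j<i} w(v_{\sigma(i)}, v_{\sigma(j)})$; the factor $r!$ in the final bound is the unavoidable price paid for not knowing the correct BFS order of $T^*$ in advance.
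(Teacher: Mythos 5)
This lemma is not proved in the present paper: it is quoted verbatim from \cite[Lemma~4.10]{WDG}, so there is no in-paper proof against which to compare. Your argument is correct and is, up to presentation, the standard proof from that reference: expand by multilinearity, apply the defining bound \eqref{EqFundamental}, dominate $\MWST{L[U]}$ by an ordered product along a search order of a maximum spanning tree (which exists whenever $L[U]$ is connected; otherwise $\MWST{L[U]}=0$ and the bound is trivial), sum over all $r!$ orderings and relabel (using that $\Psi$ depends only on the underlying multiset), and finally peel $u_r,\ldots,u_2$ one at a time using the definition of $T_{\ell}$. The $r=1$ case is degenerate (empty product, $\MWST{L[\{v\}]}=1$) and also checks out, so the proof is complete.
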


\subsection{Power of weighted dependency graphs}
\label{ssec:product_WDG}
An important property of weighted dependency graphs is the following stability property.
Consider a family of random variables $\{Y_v,v \in V\}$ 
with a $(\Psi,\bC)$ weighted dependency graph $L$
and fix some integer $d \ge 1$.
We are interested in monomials $\bm Y_I:=\prod_{v \in I} Y_{v}$ of degree at most $d$,
i.e.~where $I \in V^d$ is a list of elements of $V$ of length $d$ (possibly with repetitions).
This defines a new family of random variables $\{\bm Y_I, I \in V^d\}$.
It turns out that this family admits a natural weighted dependency graph inherited from that of $\{Y_v,v \in V\}$.

To state this formally, we need to introduce some more notation.
Let $L$ be a weighted graph with vertex set $V$.
The $d$-th power $L^d$ of $L$ is a weighted graph with vertex set $V^d$.
If $I$ and $J$ are in $V^d$, the weight between vertices $I$ and $J$ 
is \[ W(I,J):= \max_{u \in I,v \in J} w(u,v),\]
where $w(u,v)$ is the weight between $u$ and $v$ in $L$.

Besides, we say that a function $\Psi$ defined on multisets of elements of a set $V$
is super-multiplicative if $\Psi(U_1 \uplus U_2) \ge \Psi(U_1) \Psi(U_2)$
for all multisets $U_1$ and $U_2$ (here and throughout the paper, $\uplus$ is the {\em disjoint union} of multisets,
i.e.~if an element belongs to both $U_1$ and $U_2$ with multiplicity $m_1$ and $m_2$, it belongs to $_1 \uplus U_2$
with multiplicity $m_1+m_2$).
Finally, a function $\Psi$ defined on lists of $V$
induces a function on lists of multisets of elements of $V$ by setting
\begin{equation}
     \bm{\Psi}( (I_1,\cdots,I_r)) = \Psi(I_1 \uplus \cdots \uplus I_r).
\end{equation}

The following proposition was proved in \cite[Proposition 5.11]{WDG}.
\begin{proposition}
  \label{prop:WDG_Products}
  Consider random variables $\{Y_v,v \in V\}$
  with a $(\Psi,\bC)$ weighted dependency graph $L$
  and assume that $\Psi$ is super-multiplicative. 
  
  We fix some integer $d \ge 1$. 
  Then $L^d$ is a $(\bm{\Psi},\bD_d)$ weighted dependency graph for the family
  $\{ \bm Y_I,\, I \in V^d\}$,
  where the constants $\bD_d=(D_{d,r})_{r \ge 1}$ depend only on
  $d$, $r$ and $\bC$.
\end{proposition}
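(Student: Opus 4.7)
The starting point is the cumulant--product formula of Leonov and Shiryaev: with $U := I_1 \uplus \cdots \uplus I_r$ denoting the concatenated multiset,
\begin{equation*}
\kappa(\bm Y_{I_1}, \ldots, \bm Y_{I_r}) = \sum_{\pi} \ \prod_{B \in \pi} \kappa(Y_v : v \in B),
\end{equation*}
where $\pi$ ranges over partitions of $U$ whose \emph{collapse graph} on $[r]$---in which $i$ and $j$ are declared adjacent whenever some block of $\pi$ meets both $I_i$ and $I_j$---is connected. This is the sole algebraic input; each summand is then controlled directly by the $(\Psi,\bC)$ weighted dependency graph hypothesis on $\{Y_v\}$.

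Applying the WDG inequality block by block yields
\begin{equation*}
\left| \prod_{B \in \pi} \kappa(Y_v : v \in B) \right| \le \prod_{B \in \pi} C_{|B|}\, \Psi(B)\, \MWST{L[B]}.
\end{equation*}
Super-multiplicativity of $\Psi$ gives $\prod_B \Psi(B) \le \Psi(U) = \bm\Psi(I_1,\ldots,I_r)$, and since $|B|\le rd$ for every block, the product $\prod_B C_{|B|}$ is dominated by $(\max_{s\le rd}C_s)^{rd}$. The crux is therefore the purely geometric inequality
\begin{equation*}
\prod_{B \in \pi} \MWST{L[B]} \ \le \ \MWST{L^d[\{I_1,\ldots,I_r\}]}.
\end{equation*}

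I would prove this by a spanning-tree extraction. For each block $B$, fix a maximum-weight spanning tree $T_B$ of $L[B]$; then $\prod_B \MWST{L[B]}$ is exactly the product of the weights of all edges in $\bigcup_B T_B$. Contracting, inside each $T_B$, the vertices that belong to a common list $I_j$ yields a connected multigraph on the $I$'s touched by $B$; gluing these contractions across $B\in\pi$ produces a multigraph on $\{I_1,\ldots,I_r\}$ whose connectedness is precisely the collapse-graph condition on $\pi$. One can therefore select $r-1$ \emph{cross-$I$} edges from $\bigcup_B T_B$ whose images form a spanning tree $T^{\star}$ of $\{I_1,\ldots,I_r\}$. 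Each selected edge $(u,v)$ with $u\in I_j$, $v\in I_k$ satisfies $w(u,v)\le W(I_j,I_k)$ by the very definition of $L^d$, while the unselected edges of $\bigcup_B T_B$ (within-$I$ edges or unpicked cross-$I$ edges) have weight in $(0,1]$ and so only decrease the total product. Multiplying gives the claimed inequality.

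The theorem then follows by summing over the admissible partitions $\pi$, whose number is bounded by the Bell number of an $rd$-element set---a quantity depending only on $d$ and $r$. The constant $D_{d,r}$ absorbs this count together with the uniform block-constant bound. I expect the main subtlety to lie in the tree-extraction step: the multiset $U$ must be handled with distinguished copies, so that repeated occurrences of a common $v\in V$ landing in different $I_j$'s are tracked throughout, and the simultaneous choice of the trees $T_B$ and of the $r-1$ cross-$I$ edges must be coherent enough that their projection is an honest spanning tree rather than merely a connected multigraph.
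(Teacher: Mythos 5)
The paper states this proposition without proof, citing \cite[Proposition~5.11]{WDG}; your argument---the Leonov--Shiryaev cumulant-of-products expansion over partitions $\pi$ of $U=I_1\uplus\cdots\uplus I_r$ with connected collapse graph, the per-block application of the $(\Psi,\bC)$ hypothesis, super-multiplicativity of $\Psi$ to absorb $\prod_{B\in\pi}\Psi(B)$ into $\bm{\Psi}(I_1,\dots,I_r)$, and the spanning-tree extraction establishing $\prod_{B\in\pi}\MWST{L[B]}\le\MWST{L^d[\{I_1,\dots,I_r\}]}$---is correct and follows the same route as the cited reference. The subtlety you flag at the end is not a real obstruction: any connected multigraph on the $r$ vertices $\{I_1,\dots,I_r\}$ contains a spanning tree whose preimage edges in $\bigcup_B T_B$ are necessarily cross-$I$, and the only remaining housekeeping (tracking distinguished copies of repeated list entries, and replacing $\max_s C_s$ by $\max(1,\max_{s\le rd} C_s)$ so the block-count does not matter) is routine.
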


\section{A weighted dependency graph structure\\
for uniform random permutation in conjugacy classes} 
\label{sec:WDG}

Fix an integer partition $\lambda^n$ of size $n$, for $(i,j)$ in $[n]^2$, we introduce the random variables
\[ B_{i,j} = \One[\sigma_{\la^n}(i)=j ].\] 

Note that the law of $B_{i,j}$  depends on $\la^n$.
Given a subset $\alpha=\{(i_t,j_t), t \le T\}$
 of $[n]^2$, we write $S(\alpha)=\{i_1,\ldots,i_T,j_1,\ldots,j_T\}$
 for the {\em support} of $\alpha$.
 Furthermore, we denote by $\cc(V,E)$  the number of connected components of the graph $(V,E)$.
 We also write $\CC(\alpha)$ instead of $\cc(S(\alpha),\alpha)$;
this is the number of components of the graph with edge set $\alpha$ and no isolated vertices. 
We now let
\begin{equation}\label{eq:def_Psi}
\Psi(\alpha) = n^{CC(\alpha)-\# S(\alpha)}.
\end{equation}
We define similarly $\Psi$ on lists of elements of $[n]^2$,
and it is insensitive to the order and repetitions of arguments.

 The choice of $\Psi$ may seem surprising, but in fact, it is quite natural. Indeed, the joint moments $M_\alpha:=\mathbb{E}(\prod_{(i,j) \in \alpha} B_{i,j})$ are nonzero if and only if $(S(\alpha),\alpha)$ is a disjoint union of directed paths and cycles. 
 In this case, $M_\alpha$ is $O(\Psi(\alpha))$, and this is optimal 
 (in the sense that $M_\alpha=\Theta(\Psi(\alpha))$ if $m_k(\la)=\Theta(n)$ for any fixed $k$).
 Furthermore, as proved in the next lemma,
  $\Psi$ has been chosen to be super-multiplicative,
 so that we can use \cref{prop:WDG_Products}.
\begin{lemma}
  The function $\Psi$ defined by \eqref{eq:def_Psi} is super-multiplicative.
\end{lemma}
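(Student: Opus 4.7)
The plan is to convert super-multiplicativity of $\Psi$ into a purely combinatorial inequality about connected components and shared vertices, and then prove that inequality by means of a small auxiliary graph encoding how the two edge sets glue together.

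First, I pass to exponents. For $n \ge 2$ (the case $n=1$ is trivial since $\Psi \equiv 1$), the inequality $\Psi(\alpha_1 \uplus \alpha_2) \ge \Psi(\alpha_1)\,\Psi(\alpha_2)$ is equivalent to
\begin{equation*}
\CC(\alpha_1 \uplus \alpha_2) - \#S(\alpha_1 \uplus \alpha_2) \;\ge\; \bigl[\CC(\alpha_1) - \#S(\alpha_1)\bigr] + \bigl[\CC(\alpha_2) - \#S(\alpha_2)\bigr].
\end{equation*}
Since $S(\alpha_1 \uplus \alpha_2) = S(\alpha_1) \cup S(\alpha_2)$, the inclusion-exclusion identity $\#S(\alpha_1) + \#S(\alpha_2) - \#S(\alpha_1 \uplus \alpha_2) = \#(S(\alpha_1) \cap S(\alpha_2))$ rewrites this as
\begin{equation*}
\CC(\alpha_1 \uplus \alpha_2) + \#(S(\alpha_1) \cap S(\alpha_2)) \;\ge\; \CC(\alpha_1) + \CC(\alpha_2).
\end{equation*}

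Next, I introduce an auxiliary graph $H$. Its vertex set is the disjoint union of the connected components of $G_1 := (S(\alpha_1), \alpha_1)$ and of $G_2 := (S(\alpha_2), \alpha_2)$, so that $|V(H)| = \CC(\alpha_1) + \CC(\alpha_2)$. For each vertex $v \in S(\alpha_1) \cap S(\alpha_2)$, I place one edge in $H$ between the component of $v$ in $G_1$ and the component of $v$ in $G_2$; this produces at most $\#(S(\alpha_1) \cap S(\alpha_2))$ edges. A direct check shows that two components of $G_1$ and $G_2$ lie in the same connected component of $H$ if and only if they are contained in the same connected component of the merged graph $(S(\alpha_1 \uplus \alpha_2),\, \alpha_1 \uplus \alpha_2)$, so $H$ has exactly $\CC(\alpha_1 \uplus \alpha_2)$ connected components. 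Applying to $H$ the standard bound that any graph has at least $|V|-|E|$ connected components yields the desired inequality.

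The argument is essentially immediate once $H$ is set up, and there is no real obstacle. The only point deserving a line of verification is the identification of the components of $H$ with those of the merged graph, which follows from the observation that any path in the merged graph connecting two distinct components of $G_1 \sqcup G_2$ must alternate between edges of $\alpha_1$ and edges of $\alpha_2$ through vertices of $S(\alpha_1) \cap S(\alpha_2)$, and such a walk lifts to a path in $H$ (and conversely).
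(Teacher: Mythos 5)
Your proof is correct, and it takes a genuinely different route from the paper's. The paper first reduces to the case where both $\alpha$ and $\alpha'$ are forests (plus loops), by observing that deleting an edge of a cycle leaves all four quantities in the inequality unchanged; it then exploits the exact identity $\CC(\alpha)+|\alpha| = \#S(\alpha) + \loops(\alpha)$ valid for such graphs, and finishes with the multigraph bound $\cc(G) + |E| \ge |V| + \loops(E)$. You instead leave $\alpha_1$ and $\alpha_2$ untouched, rewrite super-multiplicativity via inclusion--exclusion on supports as
\[
\CC(\alpha_1 \uplus \alpha_2) + \#\bigl(S(\alpha_1) \cap S(\alpha_2)\bigr) \;\ge\; \CC(\alpha_1) + \CC(\alpha_2),
\]
and prove it by building the auxiliary ``contact graph'' $H$ whose vertices are the components of $G_1=(S(\alpha_1),\alpha_1)$ and $G_2=(S(\alpha_2),\alpha_2)$, with one edge per shared support element; identifying $\cc(H)$ with $\CC(\alpha_1\uplus\alpha_2)$ and invoking $\cc(H)\ge|V(H)|-|E(H)|$ then closes the argument. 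Your approach avoids the cycle-removal reduction and the special bookkeeping for loops, and the key intermediate object $H$ makes the mechanism of the inequality transparent: each shared vertex can merge at most one pair of components. The one step that deserves the care you already give it is the identification $\cc(H)=\CC(\alpha_1\uplus\alpha_2)$, which you correctly justify by decomposing any path in the merged graph into maximal runs of $\alpha_1$-edges and $\alpha_2$-edges and noting that the transition vertices lie in $S(\alpha_1)\cap S(\alpha_2)$.
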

\begin{proof}
  Let $\alpha=\{(i_t,j_t), t \le T\}$  and $\alpha'=\{(i'_t,j'_t), t \le T'\}$.
  We want to prove that
  \begin{multline}
\label{eq:supermultiplicativity}
    \CC(\alpha \cup  \alpha') + \#\{i_1,\ldots,i_T,j_1,\ldots,j_T \}+ \#\{i'_1,\ldots,i'_{T'},j'_1,\ldots,j'_{T'} \}  \\ \ge \CC(\alpha) + \CC(\alpha')+ \#\{i_1,\ldots,i_T,j_1,\ldots,j_T,i'_1,\ldots,i'_{T'},j'_1,\ldots,j'_{T'} \} .
  \end{multline}
  When $\alpha$ contains a cycle $C$ of length at least $2$, removing an edge of $C$ from $\alpha$
  does not change any quantity of \eqref{eq:supermultiplicativity}.
By symmetry, we may assume that neither $\alpha$ nor $\alpha'$ contain a cycle, except possibly loops.
Then, writing $\loops(\alpha)$ for the number of $\loops$ in $\alpha$ (counted with multiplicity), one has
$$ \CC(\alpha)+T = \#\{i_1,\ldots,i_T,j_1,\ldots,j_T \} + \loops (\alpha)$$
      and
      $$ \CC(\alpha')+T' = \#\{i'_1,\ldots,i'_T,j'_1,\ldots,j'_T \}+ \loops (\alpha').$$
Consequently,  \eqref{eq:supermultiplicativity} is equivalent to
\[    \CC(\alpha \cup  \alpha') +T+T'  \ge  \#\{i_1,\ldots,i_T,j_1,\ldots,j_T,i'_1,\ldots,i'_T,j'_1,\ldots,j'_T \}+ \loops (\alpha)+  \loops (\alpha') .
  \]
The latter holds true since for any multigraph $G=(V,E)$, one has
\[\cc(G) + |E| \ge |V| + \loops(E).\qedhere\]
\end{proof}

The goal of this section is to prove the following proposition.

\begin{theorem}
  \label{prop:WDG_conjugacy}
  There is a universal sequence $\bm C=(C_r)_{r \ge 1}$ such that the following holds.
  For any $n \ge 1$ and any $\la \vdash n$,
  the complete graph $L$ on $[n]^2$ with weights
  \begin{equation}\label{eq:def_w}
    w\big( (i,j),(k,\ell) \big)=\begin{cases}
    \frac1n &\text{ if $\{i,j\}$ and $\{k,\ell\}$ are disjoint;}\\
    1 &\text{otherwise,}
  \end{cases}
\end{equation}
  is a $(\Psi,\bm C)$ weighted dependency graph for the family $\{B_{i,j},(i,j) \in [n]^2\}$.
\end{theorem}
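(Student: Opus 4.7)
The strategy is to couple $\sigma_{\la^n}$ with a uniform random permutation — for which \cref{prop:WDG_uniform_permutations} already provides a WDG — and transport the bound through the product construction of \cref{prop:WDG_Products}.

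\emph{Coupling and linearization.} Fix once and for all a reference permutation $\sigma_0 \in S_n$ of cycle type $\la^n$, and let $\tau \in S_n$ be uniformly distributed, so that $\sigma_{\la^n} := \tau \sigma_0 \tau^{-1}$ is uniform on $\mathcal C_{\la^n}$. Since $\sigma_{\la^n}(i) = j$ is equivalent to $\sigma_0(\tau^{-1}(i)) = \tau^{-1}(j)$, introducing $A_{x,y} := \One[\tau^{-1}(x) = y]$ yields
\[ B_{i,j} \;=\; \sum_{a \in [n]} A_{i,a}\, A_{j, \sigma_0(a)}, \]
so each $B_{i,j}$ is a linear combination of products of two indicators of the uniform permutation~$\tau^{-1}$.

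\emph{Application of the product WDG.} By \cref{prop:WDG_uniform_permutations}, the family $\{A_{x,y}\}$ admits a $(\Psi_u, \bC_u)$-weighted dependency graph $L_u$ on $[n]^2$, with the super-multiplicative function $\Psi_u(U) = n^{-\#U}$. \cref{prop:WDG_Products} applied with $d = 2$ then supplies a $(\bm\Psi_u, \bD)$-WDG $L_u^2$ on $([n]^2)^2$ for the products $\{A_v A_w\}$. For any multiset $U = \{v_1, \ldots, v_r\}$ of $[n]^2$ with $v_s = (i_s, j_s)$, multilinearity of cumulants gives
\[ \ka(B_{v_1}, \ldots, B_{v_r}) \;=\; \sum_{a_1, \ldots, a_r \in [n]} \ka\bigl(A_{i_1, a_1} A_{j_1, \sigma_0(a_1)},\, \ldots,\, A_{i_r, a_r} A_{j_r, \sigma_0(a_r)}\bigr), \]
and each inner cumulant is bounded by $D_r \cdot \bm\Psi_u(\vec I) \cdot \MWST{L_u^2[\vec I]}$, where $I_s := ((i_s, a_s), (j_s, \sigma_0(a_s)))$ and $\vec I := (I_1, \ldots, I_r)$. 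A direct computation shows that $\Psi(U) \cdot \MWST{L[U]} = n^{1 - \#S(U)}$, so establishing \eqref{EqFundamental} for $\{B_v\}$ reduces to the combinatorial inequality
\[ \sum_{(a_1, \ldots, a_r) \in [n]^r} \bm\Psi_u(\vec I) \cdot \MWST{L_u^2[\vec I]} \;\le\; C_r \cdot n^{1 - \#S(U)}. \]

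\emph{The combinatorial obstacle.} Proving this inequality is the heart of the argument. A direct inspection shows that the weight $W(I_s, I_t)$ in $L_u^2$ equals $1$ iff either the outer supports meet ($\{i_s, j_s\} \cap \{i_t, j_t\} \ne \emptyset$) or the inner pairs are $\sigma_0$-adjacent ($\{a_s, \sigma_0(a_s)\} \cap \{a_t, \sigma_0(a_t)\} \ne \emptyset$), and equals $1/n$ otherwise. The plan is to split the sum over $\vec a$ according to the combinatorial type of an optimal spanning tree in $L_u^2[\vec I]$: every weight-$1/n$ edge in such a tree supplies a free summation contributing a factor $n$ balanced by the weight $n^{-1}$, whereas every weight-$1$ edge either witnesses a coincidence already present in $U$ or forces a $\sigma_0$-cycle adjacency that constrains the summation over $\vec a$ by a factor $O(1)$. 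Telescoping these contributions — via the super-multiplicativity of $\Psi$ (established above) and the correspondence between connected components of the bipartite graph underlying $U$ and those of $L[U]$ — should yield exactly the target exponent $1 - \#S(U)$. The delicate point is controlling the interplay between outer overlaps inherited from the fixed $v_s$'s and inner overlaps induced by the $\sigma_0$-cycle structure, which is where the combinatorial bookkeeping becomes most intricate.
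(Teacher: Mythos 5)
Your setup coincides with the paper's: you conjugate a fixed $\sigma_0$ by a uniform $\tau$, write $B_{i,j}=\sum_a A_{i,a}A_{j,\sigma_0(a)}$, expand the cumulant by multilinearity, and invoke \cref{prop:WDG_uniform_permutations} together with \cref{prop:WDG_Products} (with $d=2$) to bound each inner cumulant. You also correctly compute that $\Psi(U)\cdot\MWST{L[U]}=n^{1-\#S(U)}$, which reduces the theorem to the sum bound
\[ \sum_{(a_1,\ldots,a_r)\in[n]^r} \bm\Psi_u(\vec I)\cdot\MWST{L_u^2[\vec I]} \;\le\; C_r\, n^{1-\#S(U)}. \]
Up to here, you and the paper are in lockstep.

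The gap is that you stop exactly where the real work starts. Your paragraph on ``the combinatorial obstacle'' is an optimistic prospectus, not an argument: phrases like ``should yield exactly the target exponent'' and ``this is where the combinatorial bookkeeping becomes most intricate'' concede that the inequality is unproven. The spanning-tree-by-type heuristic you sketch (weight-$1/n$ edges balanced by free summations; weight-$1$ edges constraining the $a$'s) does not by itself show the exponents telescope correctly, because the weight-$1$ edges mix two very different phenomena --- fixed overlaps among the $\{i_s,j_s\}$ (independent of $\vec a$) and $\sigma_0$-cycle adjacencies among the $\{a_s,\sigma_0(a_s)\}$ (varying with $\vec a$) --- and a maximal spanning tree of $L_u^2[\vec I]$ can use either kind indiscriminately. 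Disentangling them is precisely what the paper's machinery is for: it rewrites both $\bm\Psi_u(\vec I)$ and $\MWST{L_u^2[\vec I]}$ as powers of $n$ whose exponents are numbers of connected components of four auxiliary graphs $G^{(1)},G^{(2)},G^\vee,G^\wedge$ on $[r]\times\{1,2\}$, proves a submodularity-type inequality $\cc(E_1\cup E_2)+\cc(E_1\cap E_2)\ge\cc(E_1)+\cc(E_2)$ to pull out the $\vec a$-independent factor $n^{-\cc(G^{(1)})+1}=n^{-\#\{i_1,\ldots,j_r\}+1}$, and then shows by a counting lemma that $\sum_{\vec a} n^{-\cc(G^{(2)})}=O_r(1)$ (at most $n^{\cc(G)}$ choices of $\vec a$ realize any given $G^{(2)}=G$). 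Without some equivalent of these three ingredients --- the decomposition into auxiliary graphs, the submodularity of $\cc$, and the fiber-counting bound --- your sketch cannot be turned into a proof, and your proposal leaves the theorem unproved.
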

Fix $r \ge 1$ and let $\alpha=\{\{(i_t,j_t), 1 \le t \le r\}\}$,
be a multiset of size $r$ of elements of $[n]^2$.
We want to show that
\begin{equation}\label{eq:to_show_not_simplified}
  \left|\kappa \big( B_{i_1,j_1},\dots,B_{i_r,j_r}\big)\right| \le C_r \, \Psi(\alpha)\, \mathcal M(L[\alpha]),
\end{equation}
where $\mathcal M(L[\alpha])$ is defined in \cref{ssec:def_WDG}.
We start with a lemma.
\begin{lemma}
\label{lem:ML_cc}
For any $\alpha=\{(i_1,j_1),\dots,(i_r,j_r)\}$, we have $\mathcal M(L[\alpha])=n^{-\CC(\alpha)+1}$.
\end{lemma}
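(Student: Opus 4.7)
The plan is to identify the structure of $L[\alpha]$ and then exhibit an optimal spanning tree explicitly, matching it with a lower bound on the weight exponent.

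First, I would reinterpret $L[\alpha]$ as a weighted complete graph on the $r$ vertices indexing $\alpha$: by the definition of $w$ in \eqref{eq:def_w}, the edge between two pairs $(i_s,j_s)$ and $(i_t,j_t)$ has weight $1$ if $\{i_s,j_s\}\cap\{i_t,j_t\}\neq\emptyset$ and weight $1/n$ otherwise. Call \emph{heavy} the edges of weight $1$ and \emph{light} the edges of weight $1/n$, and let $H$ denote the subgraph of $L[\alpha]$ consisting of heavy edges only. The key combinatorial observation is that two indices $s,t$ lie in the same connected component of $H$ if and only if $(i_s,j_s)$ and $(i_t,j_t)$ lie in the same connected component of the multigraph $(S(\alpha),\alpha)$: edges of $H$ correspond exactly to pairs in $\alpha$ sharing a vertex in $S(\alpha)$, and connectivity in the two graphs is expressed by the same walks. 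Consequently $H$ has exactly $\CC(\alpha)$ connected components.

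Next I would construct a spanning tree $\T$ of $L[\alpha]$ of weight $n^{-\CC(\alpha)+1}$. Pick a spanning tree inside each connected component of $H$; this gives a forest in $L[\alpha]$ with exactly $r-\CC(\alpha)$ heavy edges (of weight $1$) and $\CC(\alpha)$ components, whose vertex sets coincide with those of $H$. Now add any $\CC(\alpha)-1$ light edges joining representatives of distinct components; such edges exist because $L[\alpha]$ is the complete graph. The result is a spanning tree of total weight
\[
1^{\,r-\CC(\alpha)} \cdot n^{-(\CC(\alpha)-1)} = n^{-\CC(\alpha)+1},
\]
which shows $\mathcal M(L[\alpha])\ge n^{-\CC(\alpha)+1}$.

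For the matching upper bound, let $\T'$ be any spanning tree of $L[\alpha]$ and let $s$ be its number of light edges; its weight is $n^{-s}$. Deleting the $s$ light edges from $\T'$ leaves a subforest contained in $H$, hence with at least $\CC(\alpha)$ components; but since $\T'$ has $r-1$ edges and deleting $s$ of them produces exactly $s+1$ components, we get $s+1\ge \CC(\alpha)$, that is, $s\ge \CC(\alpha)-1$. Therefore every spanning tree has weight at most $n^{-\CC(\alpha)+1}$, proving $\mathcal M(L[\alpha])\le n^{-\CC(\alpha)+1}$ and finishing the proof.

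I do not expect a serious obstacle here: the argument is essentially the observation that the connected components of $H$ coincide with those of the multigraph $(S(\alpha),\alpha)$, together with the standard fact that a spanning tree of a graph whose ``free'' edges form a subgraph with $c$ components must use at least $c-1$ edges outside that subgraph. The only point requiring a little care is handling multisets (repeated elements of $\alpha$), but this is harmless since repeated pairs are joined by heavy edges and hence lie in the same component of $H$.
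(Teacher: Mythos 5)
Your proof is correct and takes essentially the same approach as the paper: identify the heavy (weight-$1$) subgraph, show the maximum weight spanning tree has weight $n^{-c+1}$ where $c$ is its number of connected components, and observe that $c$ equals $\CC(\alpha)$. Your treatment of the spanning-tree step is a bit more explicit than the paper's (you give separate matching upper and lower bounds), while your justification that the heavy subgraph has exactly $\CC(\alpha)$ components is terser than the paper's, which introduces an auxiliary bipartite-type graph $K_1[\alpha]$ on $2r$ vertices and derives both $L_1[\alpha]$ and $(S(\alpha),\alpha)$ from it by edge contractions; but your walk-correspondence argument is sound and equivalent.
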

\begin{proof}
By definition, the weighted graph $L$, and therefore also $L[\alpha]$,
have only edges with weight $1$ or $n^{-1}$.
To form a spanning tree of maximal weight, we need to use as many edges of weight $1$
as possible.
It follows that $\mathcal M(L[\alpha])=n^{-c+1}$, where $c$ is the number of
connected components
of the subgraph $L_1[\alpha]$ of $L[\alpha]$ obtained by keeping only edges of weight $1$.

It remains to relate connected components of $L_1[\alpha]$ to those of $(S(\alpha),\alpha)$.
By construction, $L_1[\alpha]$ has $r$ vertices decorated with pairs $(i_t,j_t)$
and edges between pairs with a non-empty intersection.
Consider a variant, denoted $K_1[\alpha]$, where we have $2r$ vertices with decorations $i_t$ and $j_t$ ($t$ running from $1$ to $r$) and with two kinds of edges
\begin{itemize}
\item for each $t$, the vertex decorated with $i_t$ is connected to that decorated with $j_t$;
\item vertices with equal decorations are connected.
\end{itemize}
See \cref{fig:example_3graphs} for an example.
Contracting the first type of edges in $K_1[\alpha]$ gives $L_1[\alpha]$.
Contracting the second type of edges instead gives the graph $(S(\alpha),\alpha)$ considered above. Hence, both have the same number of connected components and the lemma follows.
\end{proof}

\begin{figure}
\[\includegraphics{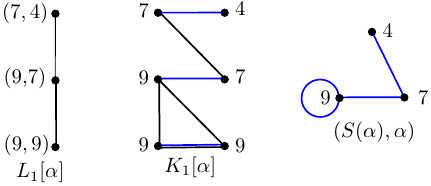}\]
\caption{From left to right: the three graphs $L_1[\alpha]$, $K_1[\alpha]$
and $(S(\alpha),\alpha)$ associated
with the $r=3$, $i_1=j_2=7$, $j_1=4$, $i_2=i_3=j_3=9$.
Edges of the first kind in $K_1[\alpha]$ are plotted in blue.
In this case, all three graphs are connected.}
\label{fig:example_3graphs}
\end{figure}
Using \cref{lem:ML_cc} and the definition of $\Psi$ (\cref{eq:def_Psi}),
\cref{eq:to_show_not_simplified} reduces to
\begin{equation}\label{eq:to_show}
 \left| \kappa \big( B_{i_1,j_1},\ldots,B_{i_r,j_r}\big) \right|\le C_r n^{-\#\{i_1,\ldots,i_r,j_1,\ldots,j_r\}+1}.
\end{equation}

We now prove this inequality.
The idea is to represent $\sigma_\la$ as $\tau^{-1} \circ \rho \circ \tau$, 
where $\rho$ is a fixed permutation in $\mathcal C_\la$ and $\tau$ a uniform random
permutation in $S_n$. As in \cref{sec:WDG_uniform_permutation},
we write $A_{i,j}=\One[\tau_i=j]$ and we have
\[B_{i,j} = \sum_{k=1}^n A_{i,k} A_{j,\rho_k}.\]
By multilinearity of cumulants, one has
\begin{equation}\label{eq:expanding_kappa}
  \kappa \big( B_{i_1,j_1},\ldots,B_{i_r,j_r}\big)
= \sum_{1 \le k_1,\ldots,k_r \le n}
\kappa \big( A_{i_1,k_1} A_{j_1,\rho_{k_1}},\ldots,A_{i_r,k_r} A_{j_r,\rho_{k_r}} \big).
\end{equation}
Combining \cref{prop:WDG_uniform_permutations} together with \cref{prop:WDG_Products} for $m=2$,
we know that $L^2$ is a $(\Psi,\bD)$ weighted dependency graph
for the family $\{A_{i,k} A_{j,k'}, (i,k,j,k') \in [n]^4\}$,
where
\begin{itemize}
\item $L^2$ is the second power of the graph $L$ defined in 
\cref{prop:WDG_uniform_permutations}; namely, in the graph $L^2$,
the vertex associated with $A_{i_s,k_s} A_{j_s,k'_s}$
is linked to that of $A_{i_t,k_t} A_{j_t,k'_t}$ by an edge of weight $1$
if \hbox{$\{i_s,j_s\} \cap \{i_t,j_t\} \ne \emptyset$} or $\{k_s,k'_s\} \cap \{k_t,k'_t\} \ne \emptyset$,
and by an edge of weight $1/n$ otherwise.
\item $\Psi\big( (i_1,k_1,j_1,k'_1), \dots, (i_r,k_r,j_r,k'_r) \big)
= n^{-\# \{(i_t,k_t),\, t\le r\} \cup \{(j_t,{k'_t}),\, t \le r \} }$;
\item $\bm D=(D_r)_{r \ge 1}$ is a universal sequence.
\end{itemize}
In particular, for any $(i_1,\dots,i_r)$, $(j_1,\dots,j_r)$ and $(k_1,\dots,k_r)$,
we have
\begin{multline}\label{eq:bound_kappaA}
  \Big| \kappa \big( A_{i_1,k_1} A_{j_1,\rho_{k_1}},\ldots,A_{i_r,k_r} A_{j_r,\rho_{k_r}} \big) \Big|
  \le D_r \cdot n^{-\# \{(i_t,k_t),\, t\le r\} \cup \{(j_t,\rho_{k_t}),\, t \le r \} } \\
\cdot \mathcal M\big(L^2\big[ (i_1,k_1,j_1,\rho_{k_1}), \dots, (i_r,k_r,j_r,\rho_{k_r})\big] \big).
\end{multline}
The weighted graph $L^2$ has only edges with weight $1$ and $1/n$.
Therefore for any subgraph $L^2[U]$, we have
$\mathcal M(L^2[U])=n^{-\cc(L^2_1[U])+1}$,
where $L^2_1[U]$ is the subgraph of $L^2[U]$ formed by its edges of weight 1.

In the case we are interested in,
we get
\begin{equation}\label{eq:ML2}
\mathcal M\big(L^2\big[ (i_1,k_1,j_1,\rho_{k_1}), \dots, (i_r,k_r,j_r,\rho_{k_r})\big] \big)
= n^{-\cc(G^A_{k_1,\ldots,k_r})+1},
\end{equation}
where $G^A_{k_1,\ldots,k_r}$ is the graph with vertex set $[r]$
and, for each $s,t$ in $[r]$, it has an edge between $s$ and $t$ if
and only if
  \hbox{$\{i_s,j_s\} \cap \{i_t,j_t\} \ne \emptyset$}
   or $\{k_s,\rho_{k_s}\} \cap \{k_t,\rho_{k_t}\} \ne \emptyset$.
We note that $G^A_{k_1,\ldots,k_r}$ depends on $i_1,\dots,i_r,j_1,\dots,j_r$,
but since these indices are fixed throughout the proof, we keep this dependence
implicit in the notation. Only the dependence in $k_1,\dots,k_r$
is made explicit.

Let us illustrate this definition with an example. \label{example}
We take $\lambda=(6,2,1)$ and choose $\rho$ to be $(1,2,3,4,5,6)(7,8)(9)$
(in the product of cycle notation).
As in \cref{fig:example_3graphs}, we let $r=3$, $i_1=j_2=7$, $j_1=4$, $i_2=i_3=j_3=9$.
We consider the term indexed by $(k_1,k_2,k_3)=(9,5,4)$, yielding $(\rho_{k_1},\rho_{k_2},\rho_{k_3})=(9,6,5)$.
Then $1$ is linked to $2$ in $G^A_{k_1,\ldots,k_r}$ because $\{i_1,j_1\} \cap \{i_2,j_2\}=\{7\}\ne \emptyset$. Likewise $\{i_2,j_2\} \cap \{i_3,j_3\}=\{9\}\ne \emptyset$
and $\{k_2,\rho_{k_2}\} \cap \{k_3,\rho_{k_3}\} = \{5\} \ne \emptyset$,
implying that $2$ is linked to $3$ in $G^A_{k_1,\ldots,k_r}$
(in fact, only one of these intersections being nonempty would suffice for $2$ to be linked to $3$).
One can check however that $1$ is not linked to $3$
since $\{i_1,j_1\} \cap \{i_3,j_3\}= \{4,7\} \cap \{9\} =\emptyset$
and $\{k_1,\rho_{k_1}\} \cap \{k_3,\rho_{k_3}\} = \{9\} \cap \{4,5\}=\emptyset$.
Hence, $G^A_{k_1,\ldots,k_r}$ is the path $\begin{array}{c} \includegraphics{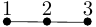}
\end{array}$.

To help us in our analysis, we now introduce, for each $r$-tuple $k_1,\cdots,k_r$, 
four (loop-free) graphs denoted $G^{(1)}_{k_1,\ldots,k_r}$, $G^{(2)}_{k_1,\ldots,k_r}$,
$G^\vee_{k_1,\ldots,k_r}$ and $G^\wedge_{k_1,\ldots,k_r}$,
all on vertex set $[r] \times \{1,2\}$.
For all these graphs, the idea is to label the vertex $(s,1)$ with the pair $(i_s,k_s)$
and $(s,2)$ with the pair $(j_s,\rho_{k_s})$.
Then we put an edge between two vertices labelled by $(x,y)$ and $(x',y')$
\begin{itemize}
  \item in $G^{(1)}_{k_1,\ldots,k_r}$ if $x=x'$, i.e.~if the first coordinates in the labels coincide;
  \item in $G^{(2)}_{k_1,\ldots,k_r}$ if $y=y'$, i.e.~if the second coordinates in the labels coincide;
  \item in $G^\wedge_{k_1,\ldots,k_r}$ if $(x,y)=(x',y')$,  i.e.~if the labels coincide;
    \item in $G^\vee_{k_1,\ldots,k_r}$ if either $x=x'$ or $y=y'$, i.e.~if one coordinate in the labels coincide.
\end{itemize}
Additionally, in $G^{(2)}_{k_1,\ldots,k_r}$ and $G^\vee_{k_1,\ldots,k_r}$,
we add edges between $(s,1)$ and $(s,2)$ for any $s \le r$ (for clarity,
these extra edges will be drawn in blue).
We continue our example above by representing the four associated graphs
$G^{(1)}_{k_1,\ldots,k_r}$, $G^{(2)}_{k_1,\ldots,k_r}$,
$G^\vee_{k_1,\ldots,k_r}$ and $G^\wedge_{k_1,\ldots,k_r}$ in \cref{fig:example_4graphs}.
We note that a coincidence between a value in $\{i_1,\dots,i_r,j_1,\dots,j_r\}$
and a value in the set $\{k_1,\dots,k_r,\rho_{k_1},\dots,\rho_{k_r}\}$ 
has no influence on any of the graphs considered here.
\begin{figure}
 \[\includegraphics{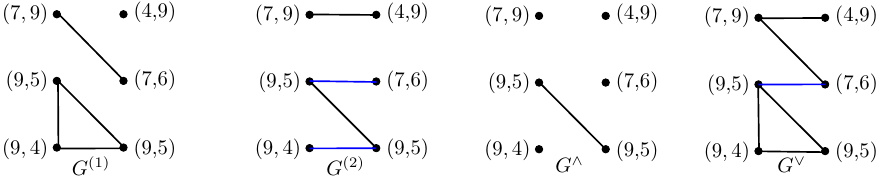}\]
\caption{From left to right: the four graphs $G^{(1)}_{k_1,\ldots,k_r}$, $G^{(2)}_{k_1,\ldots,k_r}$,
$G^\vee_{k_1,\ldots,k_r}$ and $G^\wedge_{k_1,\ldots,k_r}$ associated
with the particular values of $i_1,\dots,i_r,j_1,\dots,j_r,k_1,\dots,k_r$ 
given on page \pageref{example}.}
\label{fig:example_4graphs}
\end{figure}

Comparing their definitions,
we immediately see that $G^A_{k_1,\ldots,k_r}$ can be obtained from
$G^\vee_{k_1,\ldots,k_r}$ by merging, for each $s \le r$,
the vertices $(s,1)$ and $(s,2)$ into a single vertex $s$.
Since $G^\vee_{k_1,\ldots,k_r}$ has edges between  $(s,1)$ and $(s,2)$ for any $s \le r$,
this merge operation does not change the number of connected components. 
Besides, again directly from definition,
we see that connected components of  $G^\wedge_{k_1,\ldots,k_r}$
correspond to distinct pairs in the set $\{(i_t,k_t),\, t\le r\} \cup \{(j_t,\rho_{k_t}),\, t \le r\}$.

With these observations,
 \cref{eq:bound_kappaA,eq:ML2} imply
\begin{equation}\label{eq:first_bound_kappaA}
 \Big| \kappa \big( A_{i_1,k_1} A_{j_1,\rho_{k_1}},\dots,A_{i_r,k_r} A_{j_r,\rho_{k_r}} \big) \Big|
\le D_r\,  n^{-\cc(G^\wedge_{k_1,\ldots,k_r})-\cc(G^\vee_{k_1,\ldots,k_r})+1}.
\end{equation}

We now need the following easy, possibly well-known, graph theoretical lemma.
\begin{lemma}
  Let $V$ be a vertex set and $E_1$, $E_2$ be two sets of edges on $V$.
  Then
  \begin{equation}\label{eq:Inclusion_Exclusion_Connected_Component}
    \cc(V,E_1 \cup E_2) + \cc(V,E_1 \cap E_2) \ge \cc(V,E_1) + \cc(V,E_2).
  \end{equation}
\label{lem:Inclusion_Exclusion_Connected_Component}
\end{lemma}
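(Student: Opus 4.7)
The plan is to recognize this inequality as the submodularity of the graphic matroid rank function. For any graph $(V,E)$ one has $\cc(V,E) = |V| - r(E)$, where $r(E)$ denotes the size of a spanning forest of $(V,E)$, equivalently the rank of $E$ in the cycle matroid on $V$. Since matroid rank functions are always submodular,
\[r(E_1 \cup E_2) + r(E_1 \cap E_2) \le r(E_1) + r(E_2),\]
and substituting $r(E) = |V| - \cc(V,E)$ gives exactly the desired inequality \eqref{eq:Inclusion_Exclusion_Connected_Component}.

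Should one prefer to avoid matroid terminology, I would instead give a direct edge-by-edge argument. Set $F := E_1 \setminus E_2$ and add the edges of $F$ one at a time, both to $E_1 \cap E_2$ and to $E_2$. The key elementary fact is that inserting a single edge $e = \{u,v\}$ into a graph $G$ decreases $\cc(G)$ by $1$ if $u$ and $v$ lie in distinct components of $G$, and by $0$ otherwise. Since $E_1 \cap E_2 \subseteq E_2$, any $(E_1 \cap E_2)$-path from $u$ to $v$ survives in $E_2$, so the decrease in $\cc$ produced by adding $e$ is at least as large when $e$ is inserted into the smaller graph. Iterating this comparison along a fixed enumeration of $F$ yields
\[\cc(E_1 \cap E_2) - \cc(E_1) \;\ge\; \cc(E_2) - \cc(E_1 \cup E_2),\]
which is a rearrangement of \eqref{eq:Inclusion_Exclusion_Connected_Component}.

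There is no real obstacle here: the lemma is a classical fact, admitting either a one-line proof via matroid submodularity or a short induction on $|F|$. The only mild technical point in the elementary approach is the pointwise comparison of the ``drop in $\cc$'' between a graph and a supergraph; this is immediate once one fixes the order in which the edges of $F$ are inserted and tracks components step by step.
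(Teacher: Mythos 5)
Your proposal is correct, and both of your routes are genuinely different from the paper's. The paper reduces to the case where $E_1$ and $E_2$ are forests by iteratively deleting edges from cycles (choosing edges not shared with the other set, or shared, as appropriate), then exploits the exact identity $\cc(V,E)=|V|-|E|$ for forests plus inclusion--exclusion on edge counts. Your first route identifies the statement outright as submodularity of the cycle-matroid rank $r(E)=|V|-\cc(V,E)$, which is a clean one-liner if one is willing to import that classical fact. Your second route is a diminishing-returns argument: adding the edges of $E_1\setminus E_2$ one at a time simultaneously to $E_1\cap E_2$ and to the supergraph $E_2$, and observing at each step that the drop in $\cc$ can only be larger in the sparser graph because ``same component'' is monotone under adding edges. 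This argument is self-contained, has no case split on cycles, and makes the underlying monotonicity structure transparent; the paper's reduction to forests is perhaps slightly more computational but avoids any induction along an edge enumeration. All three proofs are short and correct; yours and the paper's are both reasonable choices, and your edge-by-edge argument is arguably the most conceptually illuminating of the elementary ones.
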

\begin{proof}
  Assume $E_1$ contains a cycle $C$.
  If $E_2$ contains $C$ as well, we can remove any edge of $C$ from $E_1$
  without changing any of the quantities appearing in \eqref{eq:Inclusion_Exclusion_Connected_Component}.
  If $E_2$ does not contain $C$, removing from $E_1$ an edge of $C$ which is not in $E_2$
  does not change any of the quantities appearing in \eqref{eq:Inclusion_Exclusion_Connected_Component} either.
  Hence, it is enough to prove \eqref{eq:Inclusion_Exclusion_Connected_Component}
  when $E_1$ does not contain any cycle.
  With the same argument, we can assume that $E_2$ does not contain any cycle as well.
  In this case, we have
  \[\cc(V,E_1)= |V|- |E_1|, \quad \cc(V,E_2)= |V|- |E_2|, \quad \cc(V,E_1 \cap E_2) =|V|-|E_1 \cap E_2|.\]
  The union $E_1 \cup E_2$ may however contain cycles, so we only have an inequality
  \[\cc(V,E_1 \cup E_2) \ge |V|- |E_1 \cup E_2|. \]
  Together with the standard inclusion-exclusion principle $|E_1 \cup E_2| +|E_1 \cap E_2|= |E_1| + |E_2|$,
  this proves the lemma.
\end{proof}

By construction,
the graph $\cc(G^\vee_{k_1,\ldots,k_r})$ is the union of $G^{(1)}_{k_1,\ldots,k_r}$ and $G^{(2)}_{k_1,\ldots,k_r}$.
On the other hand, the edges of $G^\wedge_{k_1,\ldots,k_r}$ are included in that
of the intersection of $G^{(1)}_{k_1,\ldots,k_r}$ and $G^{(2)}_{k_1,\ldots,k_r}$
(since we have added edges $\{(s,1),(s,2)\}$ in $G^{(2)}_{k_1,\ldots,k_r}$, it might happen
that such an edge is in both $G^{(1)}_{k_1,\ldots,k_r}$ and $G^{(2)}_{k_1,\ldots,k_r}$,
but not in  $G^\wedge_{k_1,\ldots,k_r}$; this is for example the case
of the bottom edge in \cref{fig:example_4graphs}).
Consequently, $G^\wedge_{k_1,\ldots,k_r}$ has more connected components than this intersection.
Combining these observations with
 \cref{lem:Inclusion_Exclusion_Connected_Component}, for any $k_1,\ldots,k_r$, we have
\begin{align*}
\cc(G^\wedge_{k_1,\ldots,k_r})+\cc(G^\vee_{k_1,\ldots,k_r}) &\ge
  \cc(G^{(1)}_{k_1,\ldots,k_r} \cap G^{(2)}_{k_1,\ldots,k_r}) +  \cc(G^{(1)}_{k_1,\ldots,k_r} \cup G^{(2)}_{k_1,\ldots,k_r}) \\
  & \ge \cc(G^{(1)}_{k_1,\ldots,k_r}) +\cc(G^{(2)}_{k_1,\ldots,k_r}).
  \end{align*}

Back to \eqref{eq:first_bound_kappaA}, we get that, for any $k_1,\ldots,k_r$,
\begin{equation}
\Big| \kappa \big( A_{i_1,k_1} A_{j_1,\rho_{k_1}},\dots,A_{i_r,k_r} A_{j_r,\rho_{k_r}} \big) \Big|
\le D_r 
n^{-\cc(G^{(1)}_{k_1,\ldots,k_r}) -\cc(G^{(2)}_{k_1,\ldots,k_r})+1}.
  \label{eq:second_bound_kappaA}
\end{equation}
At this stage, we observe the graph $G^{(1)}_{k_1,\ldots,k_r}$ is independent of $k_1,\ldots,k_r$
(it compares the first coordinates of the labels, which are either $i$'s or $j$'s).
Its number of connected components is the number of distinct elements in $\{i_1,\ldots,i_r,j_1,\ldots,j_r\}$.
Hence, summing \eqref{eq:second_bound_kappaA} over  $k_1,\ldots,k_r$ and recalling \eqref{eq:expanding_kappa},
we obtain
\begin{equation}
\Big| \kappa \big( B_{i_1,j_1},\ldots,B_{i_r,j_r}\big) \Big|
\le D_r\cdot n^{-\#\{i_1,\ldots,i_r,j_1,\ldots,j_r\}+1} \cdot \sum_{1 \le k_1,\ldots,k_r \le n}
n^{-\cc(G^{(2)}_{k_1,\ldots,k_r})}.
  \label{eq:third_bound_kappaB}
\end{equation}

We need a final lemma.
\begin{lemma}
Fix $\rho$ in $S_n$. For any graph $G$ on vertex set $[r]\times\{1,2\}$,
the number of lists $(k_1,\cdots,k_r)$ in $[n]^r$ with $G^{(2)}_{k_1,\ldots,k_r}=G$ is at most $n^{\cc(G)}$.
  \label{lem:counting}
\end{lemma}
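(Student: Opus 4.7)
The plan is to count admissible tuples $(k_1,\ldots,k_r)$ component by component of $G$, showing that each connected component of $G$ contributes at most a factor of $n$ to the count, which multiplied over components gives $n^{\cc(G)}$. The crucial observation is that each edge of $G$ encodes a relation between the values $k_s$ or $\rho_{k_s}$ at its endpoints, and that every such relation is invertible because $\rho$ is a bijection. I would start by noting that $G^{(2)}_{k_1,\ldots,k_r}$ always contains the ``blue'' edges $\{(s,1),(s,2)\}$ for each $s\in[r]$, so if $G$ does not contain all of them then no tuple yields $G^{(2)}_{k_1,\ldots,k_r}=G$, and the bound $0\le n^{\cc(G)}$ is trivial. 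Thus I may assume that all blue edges belong to $G$; in particular the vertices $(s,1)$ and $(s,2)$ always lie in the same connected component.

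To each vertex $(s,1)$ I would attach the value $k_s$, and to $(s,2)$ the value $\rho_{k_s}$. Each edge of $G$ then encodes an equation between the values at its two endpoints: an edge $(s,1)$–$(t,1)$ imposes $k_s=k_t$; an edge $(s,2)$–$(t,2)$ imposes $\rho_{k_s}=\rho_{k_t}$, equivalent to $k_s=k_t$ by injectivity of $\rho$; an edge $(s,1)$–$(t,2)$ imposes $k_s=\rho_{k_t}$, equivalent to $k_t=\rho^{-1}(k_s)$; and a blue edge $(s,1)$–$(s,2)$ relates $k_s$ to $\rho_{k_s}$. In every case, the value at one endpoint is determined by the value at the other via the identity, $\rho$ or $\rho^{-1}$. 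Fixing a connected component $C$ of $G$, a vertex $v_0 \in C$, and any spanning tree of $C$ rooted at $v_0$, a straightforward induction along the tree then shows that specifying the value attached to $v_0$ determines the value attached to every vertex of $C$, and in particular determines $(k_s)_{s\in S_C}$, where $S_C:=\{s:(s,1)\in C\}$.

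The value at $v_0$ can take at most $n$ distinct values, so each connected component admits at most $n$ compatible assignments, and multiplying over components yields the bound $n^{\cc(G)}$ on the number of admissible tuples. There is no substantial obstacle to this argument: the whole proof rests on the invertibility of $\rho$ combined with a standard spanning-tree induction.
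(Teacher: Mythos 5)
Your proof is correct and follows essentially the same approach as the paper's: each edge of $G$ encodes an invertible relation between $k_s$ and $k_t$, so each connected component contributes at most $n$ choices, giving the bound $n^{\cc(G)}$. Your version is slightly more explicit than the paper's (you observe that the blue edges must all lie in $G$ for the count to be nonzero, invoke the injectivity of $\rho$ for edges of the form $(s,2)$--$(t,2)$, and frame the determination step as a spanning-tree induction), but there is no substantive difference in the argument.
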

\begin{proof}
 Observe that if  $G^{(2)}_{k_1,\ldots,k_r}=G$ and if $(t,h)$ is linked to $(s,h')$ in $G$,
 the value of $k_s$ is determined by that of $k_t$; they should either be equal if $h=h'$, or they should satisfy $k_s=\rho_{k_t}$ if $h=1$ and $h'=2$, or $k_t=\rho_{k_s}$ if $h=2$ and $h'=1$.
 Hence, a list $(k_1,\cdots,k_r)$ in $[n]^r$ with $G^{(2)}_{k_1,\ldots,k_r}=G$
 is determined by the value of one $k_t$ in each component of $G$.
 This proves the lemma.
\end{proof}
Consequently, for any graph $G$ on vertex set  $[r]\times\{1,2\}$, the lists $(k_1,\cdots,k_r)$ in $[n]^r$ with $G^{(2)}_{k_1,\ldots,k_r}=G$
contributes at most $1$ to the sum $\sum_{1 \le k_1,\ldots,k_r \le n}
n^{-\cc(G^{(2)}_{k_1,\ldots,k_r})}$.
Thus the latter sum is bounded by a constant depending only on $r$, namely $2^{\binom{2r}{2}}$.
Inserting this inequality in \eqref{eq:third_bound_kappaB},
this proves \eqref{eq:to_show} and thus \cref{prop:WDG_conjugacy}.

\section{Proof of asymptotic normality of vincular pattern counts}
\label{sec:proof clt}
By the method of moments, convergence \eqref{eq:clt}, together with moment convergence, hold if 
\begin{itemize}
\item $Var( X^{\pi}(\sigma_{\la^n}) ) = n^{2m-1} ( f(p_1)+p_2 g(p_1)) (1+o(1))$;
\item and higher cumulants tend to $0$ after normalization, i.e.~$\kappa_r(X^{\pi}(\sigma_{\lambda^n})) =o(n^{r(m-\frac 12)})$ for $r\ge 3$.
\end{itemize}
The existence of polynomial functions $f$ and $g$ such that the first item holds
is proven in \cite{hamaker} (see Proposition 7.2 and Theorem 8.14 there).
We will bound the cumulants using the weighted dependency graph structure of \cref{sec:WDG}.

Note that we have
\[X^{(\pi,A)}(\sigma_{\la^n})= \sum_{i_1 < \dots < i_k \atop i_{s+1}=i_s+1 \text{ for }s \in A} \,
\sum_{j_1,\dots,j_k \atop j_{\pi^{-1}(1)} < \dots < j_{\pi^{-1}(k)} } B_{i_1,j_1} \cdots B_{i_k,j_k}. \]
We denote by $\cI^{(\pi,A)}$ the set of tuples $(i_1,\dots,i_k,j_1,\dots,j_k)$ as in the above sum.
To bound the cumulant of $X^{(\pi,A)}(\sigma_{\la^n})$, we
need a weighted dependency graph for the family of products $B_{i_1,j_1} \cdots B_{i_k,j_k}$. 
This is given by  applying the stability property
of weighted dependency graphs of \cref{prop:WDG_Products} to the specific
dependency graph of \cref{prop:WDG_conjugacy},
as we now explain.

Let us recall the notation from \cref{ssec:product_WDG}, adapted to the present situation.
If each of $\alpha_1,\dots,\alpha_\ell$ is a list of $k$ elements (or $k$-list for short) of $[n]^2$, we define
\[\bm{\Psi}(\alpha_1,\dots,\alpha_\ell) = \Psi(\alpha_1 \cup \dots \cup \alpha_\ell),\]
where in the right-hand side, lists are seen as sets, and we recall that $\Psi$ is defined in \eqref{eq:def_Psi}.
   We also consider the $k$-th power $L^k$  of the weighted graph $L$
from \cref{prop:WDG_conjugacy}.
Concretely, $L^k$ has vertices indexed by $k$-lists of $[n]^2$ and its edge weights are given by
\[  w_k(\alpha,\beta) := \max_{(i,\ell) \in \alpha,\, (j,\ell') \in \beta} w((i,\ell),(j,\ell'))
=\begin{cases}
  \frac1n &\text{ if  $S(\alpha) \cap S(\beta) =\emptyset$;}\\
  1 &\text{ otherwise.}
\end{cases}
\]
Using \cref{prop:WDG_Products},
  there is a universal sequence $\bm D_k=(C^k_r)_{r \ge 1}$ such that
 $L^k$ is a $(\bm{\Psi}, \bm D_k )$ weighted dependency graph
 for the set of random variables $\prod_{t=1}^k B_{i_t,j_t}$, indexed
 by $k$-lists $\alpha=((i_1,j_1),\dots,(i_k,j_k))$ 
  of elements of $[n]^2$.
 We shall consider the restriction of this weighted graph
 to $k$-lists in $\cI^{(\pi,A)}$:
 this is a $(\bm{\Psi}, \bm D_k )$ weighted dependency graph
 for the set of random variables $\prod_{t=1}^k B_{i_t,j_t}$, indexed
 by $\cI^{(\pi,A)}$.
 
 To use \cref{lem:BorneCumulant}, we want to bound the quantities
 $R$ and $(T_\ell)_{\ell \geq 1}$,
 whose definitions are now recalled for this specific weighted dependency graph.
 \begin{equation}\label{eq:R_substituted}
R =\sum_{\alpha} \bm\Psi (\alpha) = \sum_{(i_1,\ldots,i_k,j_1,\ldots,j_k) \in \cI^{(\pi,A)}}  n^{\CC\big( (i_1,j_1),\dots,(i_k,j_k) \big) - \#\{ i_1,\ldots,i_k,j_1,\ldots,j_k\}} ,
 \end{equation}
 and 
   \begin{equation}\label{eq:def_Tl}
    T_\ell = \max_{\alpha_1, \ldots, \alpha_\ell} \left[\sum_{\beta} W(\{\beta\}, \{\alpha_1, \ldots, \alpha_\ell\}) 
    \frac{\bm{\Psi}(\alpha_1, \ldots, \alpha_\ell, \beta)}{\bm{\Psi}(\alpha_1, \ldots, \alpha_\ell)} \right],
  \end{equation}
    where, in the indices, each of $\alpha_1, \ldots, \alpha_\ell, \beta$
   is a $k$-list in $\cI^{(\pi,A)}$, and where
\begin{equation}\label{eq:W_simplified}
  W(\{\beta\}, \{\alpha_1, \ldots, \alpha_\ell\}) = \max_i w_k(\beta, \alpha_i) = \begin{cases}
  \frac1n &\text{ if  $S(\beta) \cap (S(\alpha_1) \cup \cdots \cup S(\alpha_\ell)) =\emptyset$;}\\
  1 &\text{ otherwise.}
\end{cases}
\end{equation}

We start by bounding $R$.
\begin{lemma}
    $R \le 2^{\binom{2k}{2}} n^m$.
\end{lemma}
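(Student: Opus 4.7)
The plan is to decompose the sum defining $R$ according to the ``coincidence partition'' $P$ of the $2k$ positions indexing $i_1,\ldots,i_k,j_1,\ldots,j_k$: two positions lie in the same block of $P$ iff the corresponding values in the tuple $\alpha\in\cI^{(\pi,A)}$ are equal. Since $i_1<\cdots<i_k$ and the $j_t$'s are also pairwise distinct (being images of distinct positions under a permutation), no block of $P$ can contain two $i$'s or two $j$'s, so every block is either a singleton or a pair $\{i_t,j_s\}$. Writing $q=q(P)$ for the number of such pairs, every $\alpha$ inducing $P$ satisfies $\#S(\alpha)=2k-q$ and has the same underlying component graph on blocks, so $\bm{\Psi}(\alpha)=n^{\CC(P)-(2k-q)}$ depends on $\alpha$ only through $P$.

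Next I would bound the number of tuples in $\cI^{(\pi,A)}$ that induce a fixed partition $P$. The choice of the $m$ block-starts of the vincular pattern in $[n]$ determines the whole increasing sequence $(i_1,\ldots,i_k)$, giving at most $n^m$ possibilities. Once the $i$'s are fixed, the $q$ matched $j$'s are forced by $P$, while each of the remaining $k-q$ unmatched $j$'s lies in $[n]\setminus\{i_1,\ldots,i_k\}$ and, together with the matched $j$'s, must satisfy the order imposed by $\pi$; crudely dropping this last constraint gives at most $n^{k-q}$ choices. Hence the contribution of $P$ to $R$ is bounded by
\[ n^{m+k-q}\cdot n^{\CC(P)-(2k-q)} \;=\; n^{m-k+\CC(P)}. \]

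The final ingredient is the elementary observation $\CC(P)\le k$: in the graph whose vertices are the blocks of $P$ and whose $k$ edges are the pairs $(i_t,j_t)$, every vertex is incident to some edge, so every connected component contains at least one edge. Thus each $P$ contributes at most $n^m$. Finally, every set partition of $[2k]$ is uniquely determined by its underlying equivalence relation, which is a subset of the $\binom{2k}{2}$ unordered pairs of positions, so the number of admissible $P$ is at most $2^{\binom{2k}{2}}$, yielding $R\le 2^{\binom{2k}{2}}n^m$. The only mildly delicate point is the count $n^{m+k-q}$ of tuples inducing $P$, which blends the vincular block structure on the $i$'s with the $\pi$-order on the $j$'s; every other step is routine.
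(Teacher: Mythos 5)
Your proof is correct, and it takes a genuinely different route from the paper's. The paper reuses the machinery it set up for \cref{prop:WDG_conjugacy}: it builds auxiliary graphs $G^{(I)}, G^{(A)}$ on $[k]\times\{1,2\}$, identifies $\cc(G^{(I)})=\CC(\alpha)$ and $\cc(G^{(I)}\cap G^{(A)})=\#S(\alpha)$, invokes the inclusion--exclusion inequality $\cc(E_1\cup E_2)+\cc(E_1\cap E_2)\ge\cc(E_1)+\cc(E_2)$ (\cref{lem:Inclusion_Exclusion_Connected_Component}) together with $\cc(G^{(I)}\cup G^{(A)})\le m$ to get $\CC(\alpha)-\#S(\alpha)\le m-\cc(G^{(A)})$, and then counts tuples with a fixed $G^{(A)}$. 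You instead condition directly on the full coincidence partition $P$ of the $2k$ index positions: for a fixed $P$ with $q$ matched pairs you compute $\bm\Psi(\alpha)=n^{\CC(P)-(2k-q)}$ exactly, bound the number of $\alpha$'s inducing $P$ by $n^{m+k-q}$, and close with the elementary observation $\CC(P)\le k$ (a graph on the support with $k$ edges and no isolated vertices has at most $k$ components). This sidesteps \cref{lem:Inclusion_Exclusion_Connected_Component} entirely, at the cost of conditioning on finer information than strictly needed; the paper's route is less elementary but reuses a lemma that it needs elsewhere anyway. Both arrive at the same $2^{\binom{2k}{2}}n^m$ bound via the same crude $2^{\binom{2k}{2}}$ count of equality patterns.

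One small remark on your write-up: in justifying $\CC(P)\le k$ you should note explicitly that the $k$ pairs $(i_t,j_t)$ are pairwise distinct (which follows since $i_1<\cdots<i_k$), so there really are $k$ edges; this is implicit but worth making precise since some of these edges may be loops when $i_t=j_t$, and the argument ``every component contains at least one edge'' still applies to loop components.
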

\begin{proof}
 As in \cref{sec:WDG}, we introduce two graphs $G^{(I)}$ and $G^{(A)}$ associated
with a sequence $(i_1,\ldots,i_k,j_1,\ldots,j_k)$
(the dependence in $(i_1,\ldots,i_k,j_1,\ldots,j_k)$ is implicit in the notation).
 Both have vertex set $[k] \times \{1,2\}$,
where the vertex $(s,1)$, resp.~$(s,2)$, is labelled with $i_s$, resp.~$j_s$. 
Edges are chosen as follows:
\begin{itemize}
 \item in $G^{(I)}$, we connect $(s,1)$ and $(s,2)$ for all $s \le k$;
  \item in $G^{(A)}$, we connect $(s,1)$ and $(s+1,1)$ for {$s \in A$};
 \item in both graphs, we additionally connect vertices with equal labels.
\end{itemize}
With the same arguments as \cref{lem:ML_cc}, we see that connected components
of $G^{(I)}$ correspond to that of $(S((i_1,j_1),\dots,(i_k,j_k)),\{(i_1,j_1),\dots,(i_k,j_k)\})$,
implying 
\[\cc(G^{(I)})=\CC\big( (i_1,j_1),\dots,(i_k,j_k) \big).\]
Furthermore, let $G^{(\cap)}$ be the intersection of $G^{(I)}$ and $G^{(A)}$;
it only has edges connecting vertices with equal labels, and hence its number of components is the number of distinct values in $\{ i_1,\ldots,i_k,j_1,\ldots,j_k\}$.
Thus \eqref{eq:R_substituted} can be rewritten as
\begin{equation}\label{eq:first_bound_R}
 R =\sum_{(i_1,\ldots,i_k,j_1,\ldots,j_k) \in \cI^{(\pi,A)}}  
n^{\cc(G^{(I)})-\cc(G^{(\cap)})}.
\end{equation}
We now introduce the union $G^{(\cup)}$ of $G^{(I)}$ and $G^{(A)}$ 
(union in terms of edge-set). 
Since it contains the edges $\{((s,1),(s,2)),\, s \le k\} \cup \{( (s,1),(s+1,1)),\, s \in A\}$,
it has at most $m=k-\#A$ connected components.
Using \cref{eq:first_bound_R,lem:Inclusion_Exclusion_Connected_Component},
we get
\[R \le \sum_{(i_1,\ldots,i_k,j_1,\ldots,j_k) \in \cI^{(\pi,A)}}  
n^{\cc(G^{(\cup)})-\cc(G^{(A)})} \le\, n^m \sum_{(i_1,\ldots,i_k,j_1,\ldots,j_k) \in \cI^{(\pi,A)}} n^{-\cc(G^{(A)})}.\]
But, for a given graph $G$, there are at most $n^{\cc(G)}$ lists $(i_1,\ldots,i_k,j_1,\ldots,j_k)$
in $\cI^{(\pi,A)}$ such that $G^{(A)}=G$. 
Indeed, choosing a value $i_s$ or $j_s$ corresponding to a vertex of $G$
forces the values of variables $i_t$ or $j_t$ in the same component.
We finally get $R \le 2^{\binom{2k}{2}} n^m$.
\end{proof}
\medskip

The next step is to bound $T_\ell$ for all $\ell \ge 1$.
\begin{lemma} \label{lem:bound_T}
For any $\ell\geq 1 $, there exists $C^{''}_{k,\ell}$ (depending on the size $k$ of the pattern $(\pi,A)$ and on $\ell$, but not on $n$)
such that , the following holds true $$T_\ell \leq C^{''}_{k,\ell} n^{m-1}.$$
\end{lemma}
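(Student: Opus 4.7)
The plan is to mirror the strategy used to bound $R$, splitting the sum defining $T_\ell$ according to whether $\beta$'s support meets $S(\gamma)$, where I write $\gamma := \alpha_1 \cup \dots \cup \alpha_\ell$ (so $|S(\gamma)| \le 2k\ell$). For the \emph{disjoint case} $S(\beta) \cap S(\gamma) = \emptyset$, the weight $W$ equals $1/n$ and multiplicativity of $\Psi$ on disjoint inputs gives $\Psi(\gamma \cup \beta) = \Psi(\gamma)\, \Psi(\beta)$; the contribution of this case is therefore at most $R/n \le 2^{\binom{2k}{2}} n^{m-1}$. The main work is to bound the \emph{intersecting case} $S(\beta) \cap S(\gamma) \neq \emptyset$ by $O(n^{m-1})$.

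For this, I introduce \emph{modified graphs} $\hat G^{(I)}, \hat G^{(A)}$ on $[k] \times \{1,2\}$, built from $G^{(I)}, G^{(A)}$ (defined as in the proof of the $R$ bound) by adding an extra edge between $(s,h)$ and $(s',h')$ whenever $\beta_{(s,h)}$ and $\beta_{(s',h')}$ lie in the same connected component of $(S(\gamma),\gamma)$. Contracting each $\gamma$-component to a super-vertex and mimicking the argument of \cref{lem:ML_cc} yields the key identity
\[
\frac{\Psi(\gamma \cup \beta)}{\Psi(\gamma)} = n^{\cc(\hat G^{(I)}) - \cc(\hat G^{(\cap)})},
\]
with $\hat G^{(\cap)} = \hat G^{(I)} \cap \hat G^{(A)}$. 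Applying \cref{lem:Inclusion_Exclusion_Connected_Component} and observing that $\hat G^{(\cup)}$ still has at most $m$ connected components (since enlarging the $m$-component backbone already present in $G^{(\cup)}$ cannot increase $\cc$), this gives the ratio bound $\Psi(\gamma\cup\beta)/\Psi(\gamma) \le n^{m - \cc(\hat G^{(A)})}$.

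For each candidate graph $\hat G$, I count the $\beta$'s with $\hat G^{(A)} = \hat G$. Each connected component $K$ of $\hat G$ is either \emph{outside}, meaning its labels lie in $[n]\setminus S(\gamma)$ and must share a common value (at most $n$ choices), or \emph{inside}, meaning its labels all lie in a single $\gamma$-component (at most $(2k\ell)^{|K|+1}$ choices). The intersecting condition forces at least one inside component, so for $\hat G$ with $c_o$ outside and $c_i \ge 1$ inside components, the number of admissible $\beta \in \cI^{(\pi,A)}$ is at most $C_{k,\ell} \cdot n^{c_o}$. Combined with the ratio bound, the contribution of $\hat G$ is $\le C_{k,\ell}\, n^{c_o + m - \cc(\hat G)} = C_{k,\ell}\, n^{m - c_i} \le C_{k,\ell}\, n^{m-1}$, and summing over the $O(1)$ many choices of $\hat G$ and inside/outside labellings completes the bound $T_\ell \le C''_{k,\ell}\, n^{m-1}$.

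The \textbf{main technical obstacle} is justifying the ratio identity, which requires a careful analogue of \cref{lem:ML_cc} reflecting both the collapse of $\gamma$-components to super-vertices and the emergence of new $\beta$-vertices outside $S(\gamma)$. Once this identity is in hand, the rest of the argument parallels the $R$ bound, with the factor $n^{-c_i}$ produced by the intersecting condition supplying precisely the $1/n$ saving over $R$ that is required.
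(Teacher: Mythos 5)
Your route is genuinely different from the paper's. For the intersecting case, the paper works coordinate-wise via \cref{lem:diff_CC}, introduces the quantity $F_\alpha(\beta)$, and establishes the combinatorial inequality $|F_\alpha(\beta)|+|I(\beta)|\ge k-m+1$; you instead redeploy the same graph-theoretic machinery as in the $R$-bound, with auxiliary graphs $\hat G^{(I)},\hat G^{(A)}$ enriched by \enquote{same $\gamma$-component} edges, and apply \cref{lem:Inclusion_Exclusion_Connected_Component} a second time. Your key ratio identity is correct: writing $t$ for the number of $\gamma$-components of $(S(\gamma),\gamma)$ that meet $S(\beta)$, one has $\cc(\hat G^{(I)})=CC(\gamma\cup\beta)-CC(\gamma)+t$ and $\cc(\hat G^{(\cap)})=t+\#\big(S(\beta)\setminus S(\gamma)\big)$, so the difference is exactly the exponent $CC(\gamma\cup\beta)-CC(\gamma)-\#\big(S(\beta)\setminus S(\gamma)\big)$ defining $\Psi(\gamma\cup\beta)/\Psi(\gamma)$. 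And $\cc(\hat G^{(\cup)})\le m$ holds since $\hat G^{(\cup)}\supseteq G^{(\cup)}$. Also, your treatment of the disjoint case via $\Psi(\gamma\cup\beta)=\Psi(\gamma)\Psi(\beta)$ and $R/n$ is cleaner than the paper's direct bound. Overall this is a more uniform and arguably more transparent argument.

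There is, however, a genuine gap in your counting step. You assert every component of $\hat G^{(A)}$ is either \emph{outside} (labels in $[n]\setminus S(\gamma)$, sharing a common value) or \emph{inside} (labels all in a single $\gamma$-component). Neither clause is accurate and the dichotomy is not exhaustive. An $A$-edge $(s,1)$--$(s+1,1)$ enforces $i_{s+1}=i_s+1$, \emph{not} equality, so an outside component need not share a common value. More importantly, a chain of $A$-edges can connect a slot labelled inside $S(\gamma)$ to one labelled outside $S(\gamma)$, and can even bridge two distinct $\gamma$-components (a block $i_s,i_s+1,\dots,i_s+d$ whose endpoints land in different $\gamma$-components while the middle values fall outside $S(\gamma)$). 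Such a component is neither \emph{outside} nor \emph{inside} in your sense, so the case analysis as written does not cover it and the count you invoke is unjustified.

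The fix is straightforward and preserves your bound: classify components by whether they \emph{touch} $S(\gamma)$ (contain some slot labelled by an element of $S(\gamma)$). A touching component admits $O_{k,\ell}(1)$ labellings: root a spanning tree at a slot with label in $S(\gamma)$ (at most $2k\ell$ choices); traversing an $A$- or equal-label edge is deterministic, and a same-$\gamma$-component edge costs a factor at most $2k\ell$. A non-touching component cannot contain any same-$\gamma$-component edge, is pinned down by $A$- and equal-label edges to a single free value, and contributes at most $n$. The intersecting hypothesis $S(\beta)\cap S(\gamma)\ne\emptyset$ still gives at least one touching component, so $c_o\le\cc(\hat G)-1$ and your final estimate $n^{c_o+m-\cc(\hat G)}=n^{m-c_i}\le n^{m-1}$ is recovered.
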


To prove this lemma, we need the following simple combinatorial fact to control
the various terms in \eqref{eq:def_Tl}.
\begin{lemma}\label{lem:diff_CC}
For any $k$-lists $\alpha_1, \ldots, \alpha_\ell$ and $\beta=((i_1,j_1),\dots,(i_k,j_k))$ of elements of $[n]^2$, 
one has
\[
\CC(\alpha_1 \cup \dots \cup  \alpha_\ell \cup\beta) - \CC(\alpha_1 \cup \dots \cup  \alpha_\ell) \leq k - I(\beta),
\]
where $I(\beta)$ is the number of pairs $(i_s,j_s)$ in $\beta$ which intersect $S(\alpha_1 \cup \dots \cup  \alpha_\ell)$.
\end{lemma}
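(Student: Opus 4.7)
The plan is to insert the $k$ edges of $\beta=((i_1,j_1),\dots,(i_k,j_k))$ into the edge set $\alpha := \alpha_1 \cup \dots \cup \alpha_\ell$ one at a time and track the evolution of $\CC$. The principle I would rely on is the elementary monotonicity fact that, for any multiset of edges $E$ on $[n]$ and any extra edge $e=(u,v)$,
\[
\CC(E \cup \{e\}) - \CC(E) \leq \One[\{u,v\} \cap S(E) = \emptyset].
\]
The justification is a brief case analysis: as soon as at least one endpoint of $e$ already lies in $S(E)$, adding $e$ either attaches a new vertex to an existing component, merges two components, or creates a cycle inside a single component, so $\CC$ stays constant or decreases; only when both endpoints of $e$ are genuinely new does $e$ spawn a fresh component.

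Equipped with this, I will then reorder the list $\beta$ so that the $I(\beta)$ pairs which intersect $S(\alpha)$ are inserted first, and the $k-I(\beta)$ pairs with both endpoints outside $S(\alpha)$ are inserted last. Throughout the whole insertion process, the current support contains $S(\alpha)$, so every edge of the first batch has at least one endpoint already present and contributes $0$ to the variation of $\CC$ by the inequality above. Each edge of the second batch contributes at most $1$. Summing these per-edge bounds gives
\[
\CC(\alpha \cup \beta)-\CC(\alpha) \leq 0\cdot I(\beta) + 1\cdot (k-I(\beta)) = k - I(\beta),
\]
which is the desired estimate.

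I do not foresee any substantial obstacle. The only mild verifications are that loops ($i_s=j_s$) and repeated entries in the list $\beta$ are correctly handled by the elementary monotonicity inequality (they are: a loop at a vertex in $S(E)$ leaves $\CC$ unchanged, and a repeated edge is a no-op), and that the ordering argument remains valid even if certain second-batch edges turn out, after earlier insertions, to share vertices with the current support. The latter point only makes their contribution smaller than $1$, which is harmless since we only seek an upper bound.
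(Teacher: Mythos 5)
Your proof is correct and follows essentially the same approach as the paper: insert the $k$ edges of $\beta$ one at a time and observe that an edge intersecting $S(\alpha_1\cup\dots\cup\alpha_\ell)$ cannot increase $\CC$ (since that set is always contained in the current support), while any other edge increases $\CC$ by at most $1$. The reordering of $\beta$ you introduce is superfluous, as you yourself note, because the criterion is membership in the fixed set $S(\alpha_1\cup\dots\cup\alpha_\ell)$ rather than in the growing current support.
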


\begin{proof}
We can see the graph with edge set
 $\alpha_1 \cup \dots \cup  \alpha_\ell \cup\beta$ (and no isolated vertex)
  as obtained from  that with edge set  $\alpha_1 \cup \dots \cup  \alpha_\ell$
  adding successively the edges $(i_1,j_1)$, \dots, $(i_k,j_k)$ (with their extremity as new vertices if needed). 
  When adding the edge $(i_s,j_s)$, if $(i_s,j_s)$ intersects $S(\alpha_1 \cup \dots \cup  \alpha_\ell)$,
  the number of components does not increase, otherwise it might increase by $1$.
  This proves the lemma.
\end{proof}

\begin{proof}[Proof of \cref{lem:bound_T}]
    We first fix $\alpha_1, \ldots, \alpha_\ell$ and write $S(\bm{\alpha})= S(\alpha_1) \cup \cdots \cup S(\alpha_\ell)$.
    We split the sum in \eqref{eq:def_Tl} according to
the intersection $S(\beta) \cap S(\bm{\alpha})$ being empty or not.
When $S(\beta) \cap S(\bm\alpha)=\emptyset$,
we have $W(\{\beta\}, \{\alpha_1, \ldots, \alpha_\ell\})=\frac{1}{n}$ (\Cref{eq:W_simplified})
and $I(\beta)=0$. Thus \cref{lem:diff_CC} gives
\[\frac{\bm{\Psi}(\alpha_1, \ldots, \alpha_\ell, \beta)}{\bm{\Psi}(\alpha_1, \ldots, \alpha_\ell)}
=n^{\CC(\alpha_1 \cup \dots \cup \alpha_\ell \cup \beta) - \CC(\alpha_1 \cup \dots \cup \alpha_\ell) - (\# S(\alpha_1 \cup \dots \cup \alpha_\ell,\beta) - \# S(\alpha_1 \cup \dots \cup \alpha_\ell))}  \le n^{k-\#S (\beta)}.\]
This yields the following bound:
\[
  \sum_{\beta : S(\beta) \cap S(\bm{\alpha}) =\emptyset}
W(\{\beta\}, \{\alpha_1, \ldots, \alpha_\ell\}) \frac{\bm{\Psi}(\alpha_1, \ldots, \alpha_\ell, \beta)}{\bm{\Psi}(\alpha_1, \ldots, \alpha_\ell)} 
\le \sum_{i=1}^{2k} \sum_{\beta :\# S(\beta)=i}
 n^{k-i-1}.\]
 But the number of $k$-lists $\beta$ in $\cI^{(\pi,A)}$ with $\# S(\beta)=i$ is bounded by
 $C_{k,i} n^{i-|A|}$ for some constant $C_{k,i}$ (to construct such a $\beta$,
 one chooses $i-|A|$ values, the other being forced by the adjacencies conditions $i_{s+1}=i_s+1$ for $s \in A$).
 This yields, recalling that $m=k-\#A$,
 \[\sum_{\beta : S(\beta) \cap S(\bm{\alpha}) =\emptyset}      
 W(\{\beta\}, \{\alpha_1, \ldots, \alpha_\ell\}) \frac{\bm{\Psi}(\alpha_1, \ldots, \alpha_\ell, \beta)}{\bm{\Psi}(\alpha_1, \ldots, \alpha_\ell)} \le \left(\sum_{i=1}^k C_{k,i}\right) n^{m-1}.\]
\smallskip

Let us now consider the summands in \eqref{eq:def_Tl} indexed by terms $\beta$ in $\cI^{(\pi,A)}$
 such that $S(\beta) \cap S(\bm\alpha) \ne \emptyset$.
In this case, we have $W(\{\beta\}, \{\alpha_1, \ldots, \alpha_\ell\})=1$  (see \Cref{eq:W_simplified})
and the analysis is more subtle.
\smallskip

We write as usual $\beta=( (i_1,j_1),\cdots,(i_k,j_k) )$.
First we recall that the constraints $i_{s+1}=i_s+1$ for $s \in A$ splits the sequence $(i_1,\cdots,i_k)$ into $m$
blocks of necessary consecutive values.
This allows to define $F_\alpha(\beta)$ ($F$ stands for {\em forced}) as the number of indices $s$ such that $i_s$ is not in $S(\alpha)$,
but either $i_{s-1}$ is in the same block as $i_s$ (i.e.~$i_s-1 \in A$),
 or one $i_t$ in the same block is in $S(\alpha)$.
Then, for fixed $\alpha,i,j$, the number of lists $\beta$ in $\cI^{(\pi,A)}$ with $F_{\alpha}(\beta)=j$
 and $\# \big(S(\beta) \setminus                
S(\alpha_1 \cup \dots \cup \alpha_{\ell}) \big)=i$ 
is bounded by                                   
$C'_{k,\ell,i,j} n^{i-j}$ for some constant $C'_{k,\ell,i,j}$.
Indeed, to construct $\beta$, one needs $i$ new values, 
but $j$ of them are forced by the constraints $i_{s+1}=i_s$ for $s \in A$.
\smallskip

On the other hand, in each block of indices, all but possibly one $i_s$ in the block are in $F_\alpha(\beta) \uplus S(\alpha)$.
Therefore, $|F_\alpha(\beta)|+|S(\alpha) \cap \{i_1,\dots,i_k\}|\ge k-m$.
Equality occurs if and only if $|S(\alpha) \cap \{i_1,\dots,i_k\}|=0$.
Looking at the definition of $I(\beta)$ in \cref{lem:diff_CC},
we trivially have~$|I(\beta)| \ge |S(\alpha) \cap \{i_1,\dots,i_k\}|$.
Putting both inequalities together, we have \hbox{$|F_\alpha(\beta)|+|I(\beta)| \ge k-m$}, with equality if and only if $|I(\beta)|=0$.
But $|I(\beta)|=0$ implies $S(\beta) \cap S(\bm\alpha)= \emptyset$.
Since we are considering the case $S(\beta) \cap S(\bm\alpha)\ne\emptyset$, the inequality above must be strict
and we have in fact $|F_\alpha(\beta)|+|I(\beta)| \ge k-m+1$.
\smallskip

Using the definition of $\Psi$ (\cref{eq:def_Psi}), \cref{lem:diff_CC} and the inequality above, we have
\[
  \frac{\bm{\Psi}(\alpha_1, \ldots, \alpha_\ell, \beta)}{\bm{\Psi}(\alpha_1, \ldots, \alpha_\ell)}
\le n^{k-|I(\beta)| - \#S(\beta \setminus (\alpha_1 \cup \dots \cup \alpha_\ell))}
\le n^{|F_\alpha(\beta)| +m-1- \#S(\beta \setminus (\alpha_1 \cup \dots \cup \alpha_\ell))}.
\]
This yields the following bound:
\[  \sum_{\beta : S(\beta) \cap S(\bm\alpha) \ne \emptyset}
W(\{\beta\}, \{\alpha_1, \ldots, \alpha_\ell\}) \frac{\bm{\Psi}(\alpha_1, \ldots, \alpha_\ell, \beta)}{\bm{\Psi}(\alpha_1, \ldots, \alpha_\ell)}\\
\le \sum_{ i\le 2m-1 \atop j \le |A|} \sum_{\beta : \# (S(\beta) \setminus S(\alpha_1 \cup \dots \cup \alpha_{\ell}) ) =i \atop 
|F_\alpha(\beta)| =j}
 n^{j+m-1-i}.
\]
Recalling that there are at most $C'_{k,\ell,i,j} n^{i-j}$ terms in the latter sum, we conclude that
\[\sum_{\beta : S(\beta) \cap S(\bm\alpha) \ne \emptyset}      
 W(\{\beta\}, \{\alpha_1, \ldots, \alpha_\ell\}) \frac{\bm{\Psi}(\alpha_1, \ldots, \alpha_\ell, \beta)}{\bm{\Psi}(\alpha_1, \ldots, \alpha_\ell)} \le \left(\sum_{ i\le 2m-1, j \le |A|}^m C'_{k,\ell,i,j}\right) n^{m-1}.\]
Bringing everything together, we get that
\[ \sum_{\beta}      
 W(\{\beta\}, \{\alpha_1, \ldots, \alpha_\ell\}) \frac{\bm{\Psi}(\alpha_1, \ldots, \alpha_\ell, \beta)}{\bm{\Psi}(\alpha_1, \ldots, \alpha_\ell)} \le C_{k,\ell}'' n^{m-1}.\]
 Since this holds for any tuple $(\alpha_1, \ldots, \alpha_\ell)$,
 we have $T_\ell \leq C_{k,\ell}'' n^{m-1}$, as wanted.
 \end{proof}

Back to the proof of \Cref{thm:clt}, \cref{lem:BorneCumulant} gives us
\[\left|
\kappa_r(X^{(\pi,A)}(\sigma_{\lambda^n}))\right|  \leq D_r R\prod_{i=1}^{r-1} T_i \leq C_{k,r}''' n^{r(m-1)+1},
\]
for some constants $C_{k,r}'''$.
Consequently, if $r>2$, we get:
\[
\kappa_r \left(\frac{X^{(\pi,A)}(\sigma_{\lambda^n}) - \E(X^{(\pi,A)}(\sigma_{\lambda^n}))}{n^{m-\frac 12}}\right) = O(n^{1-\frac{r}{2}}) \to 0.
\]
This concludes the proof of \Cref{thm:clt}.

\section{Non-degeneracy of the limiting law}
\label{sec:non-degeneracy}
In this section, we prove \cref{thm:non-degeneracy}. 
Recall that this theorem only considers classical patterns, i.e.~the case $A=\emptyset$,
and that we write $\pi$ instead of $(\pi,\emptyset)$ for simplicity.
Note that in this case, the parameter $m$ in \cref{thm:clt} is simply equal to the size $k$ of the pattern.

Let us define $p_3=\tfrac13 (1-p_1-2 p_2) \ge 0$.
For $n \ge 0$, we set $m_i=\lfloor np_i \rfloor$ for $i\in \{2,3\}$ and $m_1=n-2m_2-3m_3$,
so that $(1^{m_1},2^{m_2},3^{m_3})$ is a partition of $n$, which we denote by $\la^n$.
From \cite[Theorem 8.14]{hamaker}, it holds that
\begin{equation}  \label{eq:VarianceHamaker}
\Var\big( X^{\pi}(\sigma_{\lambda^n}) \big) = \big(f(p_1) +p_2 g(p_1) \big) \, n^{2k-1} + O(n^{2k-2}).    
\end{equation}
In fact, in \cite[Theorem 8.14]{hamaker}, the error term is $o(n^{2k-1})$
and this is best possible assuming only $m_i/n \to p_i$,
 but a quick inspection
of the proof reveals that if $m_i/n=p_i+O(1/n)$ for $i \in \{1,2\}$,
then the error term is $O(n^{2k-2})$, as claimed.

We shall prove that $\Var\big( X^{\pi}(\sigma_{\lambda^n}) \big) \ge K_1 n^{2k-3/2}$,
for some constant $K_1=K_1(p_1,p_2)>0$. Comparing with \cref{eq:VarianceHamaker},
this forces $f(p_1) +p_2 g(p_1) >0$.
The method of proof is inspired from
\cite[Section 4]{Lisa} and \cite[Section 3.4]{WDG_SetPartitions}.
The idea is to establish a recursive inequality on $\Var\big( X^{\pi}(\sigma_{\lambda^n}) \big)$,
using a recursive construction of $\sigma_{\lambda^n}$ and the law of total variance.
We first consider the case $p_2>0$ (in \cref{ssec:varphi,ssec:recursive_variance,ssec:initial_bound,ssec:recursive_ineq})
and indicate later (in \cref{ssec:no_cycle2}) the appropriate modifications in the case $p_2=0, p_1<1$.

\subsection{Preliminary: one point conditional pattern densities}
\label{ssec:varphi}
In this section, we introduce some quantity which will be useful later.
Let $P_i=(x_i,y_i)$, for $i \le k$, be points in the unit square with distinct coordinates,
i.e.~the $x_i$'s are distinct and the $y_i$'s are distinct, but we allow here some $x_i$ to be equal to some $y_j$.
We reorder the points $(P_i)_{i \le k}$ as $P_{(i)}=(x_{(i)},y_{(i)})$ such that
$x_{(1)}<\dots<x_{(k)}$. Then there exists a unique permutation $\pi$
such that $y_{(\pi^{-1}_1)} < \dots < y_{(\pi^{-1}_k)}$,
and we set $\Perm(P_1,\dots,P_k):=\pi$.
For example,  if $P_1=(0.1,0.5), \, P_2=(0.7,0.2)$ and $P_3=(0, 0.33)$ then 
 $P_{(1)}=P_3=(0,0.333),$ $\  P_{(2)}=P_1=(0.1,0.5),P_{(3)}=P_2=(0.7,0.2)$ and 
$\Perm(P_1,P_2,P_3)=231$.

Let us introduce some randomness. We denote by $\Lambda$ the Lebesgue measure on $[0,1]^2$
and by $\Delta$ that on the diagonal $\{(x,x), x \in [0,1]\}$ with total mass 1. In particular, for any $p_1\in [0,1]$, $p_1 \Delta + (1-p_1) \Lambda$ has uniform marginals. 
We then let $P_1,\dots,P_{k-1}$ be i.i.d.~random points
with distribution $p_1 \Delta + (1-p_1) \Lambda$ and set 
\begin{equation}\label{eq:def_varphi}
  \varphi^\pi_{p_1}(x,y) = \mathbb P\big[ \Perm(P_1,\dots,P_{k-1},(x,y))=\pi \big].
\end{equation}

\begin{lemma}
  Assume $\pi$ has size at least $2$ and $p_1 <1$.
  Then the function
  \[(x,y) \mapsto \varphi^\pi_{p_1}(x,y) + \varphi^\pi_{p_1}(y,x)\]
  is non constant.
  \label{lem:varphi_non_constant}
\end{lemma}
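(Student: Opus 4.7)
The plan is to argue by contradiction: suppose $h(x,y):=\varphi^\pi_{p_1}(x,y)+\varphi^\pi_{p_1}(y,x)$ is identically equal to some constant $c\ge 0$. I will show first that $c$ must be $0$, by exhibiting a corner of $[0,1]^2$ where $h$ vanishes, and then that $\varphi^\pi_{p_1}$ is not identically zero, which together yield a contradiction.

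For the first step, the key observation is that each $P_j$ is drawn from $p_1\Delta+(1-p_1)\Lambda$, whose marginals are uniform on $[0,1]$. So almost surely no coordinate of any $P_j$ equals $0$ or $1$, and at any corner $(x_0,y_0)\in\{0,1\}^2$ the test point has a deterministic rank in both coordinates among $\{P_1,\dots,P_{k-1},(x_0,y_0)\}$. This yields the following ``corner compatibility'' conditions: $\varphi^\pi_{p_1}(0,0)>0$ requires $\pi_1=1$; $\varphi^\pi_{p_1}(1,1)>0$ requires $\pi_k=k$; $\varphi^\pi_{p_1}(0,1)>0$ requires $\pi_1=k$; and $\varphi^\pi_{p_1}(1,0)>0$ requires $\pi_k=1$. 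A short case split, using $k\ge 2$, then produces a corner with $h=0$: if $\pi_1\neq 1$, then $h(0,0)=2\varphi^\pi_{p_1}(0,0)=0$; otherwise $\pi_1=1$ implies $\pi_1\neq k$ (as $k\ge 2$) and $\pi_k\neq 1$ (as $1$ already occupies position $1$ of $\pi$), so both summands of $h(0,1)=\varphi^\pi_{p_1}(0,1)+\varphi^\pi_{p_1}(1,0)$ vanish. Either way $c=0$, so $h\equiv 0$.

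For the second step, I would condition on the positive-probability event that all $P_j$ are drawn from the continuous part $\Lambda$, giving the pointwise bound $\varphi^\pi_{p_1}(x,y)\ge (1-p_1)^{k-1}\varphi^\pi_0(x,y)$, where $\varphi^\pi_0$ is the analogous pattern density when $P_1,\dots,P_{k-1}$ are iid uniform on $[0,1]^2$. Taking $(x,y)$ also uniform on $[0,1]^2$ turns $(P_1,\dots,P_{k-1},(x,y))$ into $k$ iid uniform points, so by exchangeability $\int_{[0,1]^2}\varphi^\pi_0(x,y)\,dx\,dy=1/k!$, and hence $\int\varphi^\pi_{p_1}\ge (1-p_1)^{k-1}/k!>0$ since $p_1<1$. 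Thus $\varphi^\pi_{p_1}$, and hence $h$, is positive on a set of positive measure, contradicting $h\equiv 0$.

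The proof is essentially elementary; the main, minor, point to verify is the combinatorial case analysis in the first step, which asserts that for every $\pi\in S_k$ with $k\ge 2$, at least one of the four corner compatibility conditions must fail. This is where the hypothesis $k\ge 2$ is used, through $1\neq k$; for $k=1$, the only pattern is $\pi=1$ and $h\equiv 1$, so the statement is sharp.
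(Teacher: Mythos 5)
Your argument is correct, and it takes a genuinely different route from the paper's. The paper exhibits two explicit points at which $\varphi^\pi_{p_1}(x,y)+\varphi^\pi_{p_1}(y,x)$ takes different values: a direct computation shows $\varphi^\pi_{p_1}(0,\tfrac12)>0$ (by conditioning on all $P_j$ being sampled from $\Lambda$, then decoupling the induced subpattern $\widetilde\pi$ from the $y$-coordinate counts), while a corner argument shows $\varphi^\pi_{p_1}(0,0)=0$ when $\pi_1\neq 1$, or $\varphi^\pi_{p_1}(0,1)=\varphi^\pi_{p_1}(1,0)=0$ when $\pi_1=1$. You instead argue by contradiction: the same corner analysis (slightly more systematic, with all four corner compatibility conditions stated) forces the hypothetical constant to be $0$, and then you rule out $\varphi^\pi_{p_1}\equiv 0$ not by evaluating at a point but by integrating over $[0,1]^2$ and invoking exchangeability of i.i.d.\ uniform points, so that $\int\varphi^\pi_{p_1}\ge(1-p_1)^{k-1}/k!>0$. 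Both proofs rely on the same two ingredients (corner vanishing, and the $(1-p_1)^{k-1}$ factor from conditioning on the continuous component), but your integral argument avoids the binomial bookkeeping of the paper's pointwise lower bound and is arguably cleaner; the paper's version has the minor advantage of producing an explicit point $(0,\tfrac12)$ at which $\varphi^\pi_{p_1}$ is bounded below, though that extra information is not used elsewhere.
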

\begin{proof}
Let $\widetilde{\pi}$ be the pattern obtained from $\pi$ by removing the first entry, and standardizing.
We look at $\varphi^{\pi}_{p_1}((0,\tfrac{1}{2}))$, i.e~the probability that random
points $P_1,\dots,P_{k-1}$ together with $(0,\tfrac{1}{2})$ induce the pattern $\pi$.
This event is equivalent to the fact that $P_1,\dots,P_{k-1}$ induce $\widetilde{\pi}$
and that exactly $\pi_1-1$ points among $P_1,\dots,P_{k-1}$ have a $y$-coordinate
smaller than $\tfrac{1}{2}$. Hence, we have
\begin{align*}
{{\varphi}}^{\pi}_{p_1}(0,\tfrac{1}{2})&=
\mathbb P\big[ \Perm(P_1,\dots,P_{k-1})=\widetilde\pi \ \wedge \ \#\{i \le k-1:\, y_i<\tfrac12 \}=\pi_1-1 \big]
\\ &\geq  \mathbb P\big[ \Perm(P_1,\dots,P_{k-1})=\widetilde\pi,  \ \wedge \ \#\{i \le k-1:\, y_i<\tfrac12 \}=\pi_1-1 \ \wedge\ \forall i, x_i \neq y_i  \big].
\end{align*}
The latter event \enquote{$ \forall i, x_i \neq y_i$}
is a.s.~equivalent to the $P_i$'s being sampled according to $\Lambda$
and hence has probability $(1-p_1)^{k-1}$ (recall that  $P_1,\dots,P_{k-1}$
 have distribution $p_1 \Delta + (1-p_1) \Lambda$).
 Conditionally to \enquote{$ \forall i, x_i \neq y_i$}, the events
 \[\Perm(P_1,\dots,P_{k-1})=\widetilde\pi\,  \text{ and }
  \ \#\big\{i \le k-1:\, y_i<\tfrac12 \big\} =\pi_1-1\] are independent
  (in general, the pattern induced by uniform random points
  is independent of the set of its $y$-coordinates).
  These events occur with probability $\frac{1}{(k-1)!}$
  and $\binom{k-1} {\pi_1-1} \,2^{-k+1}$ respectively.
  Thus we get
  \[{{\varphi}}^{\pi}_{p_1}(0,\tfrac{1}{2})
\ge  \frac{(1-p_1)^{k-1}}{(\pi_1-1)! \, (k-\pi_1)!\, 2^{k-1}}. \]
In particular, when $p_1<1$, we have ${{\varphi}}^{\pi}_{p_1}(0,\frac{1}{2}) >0$.
On the other hand,
\begin{itemize}
    \item If $\pi_1\neq 1$, then ${{\varphi}}^{\pi}_{p_1}(0,0)=0$; 
    \item If $\pi_1= 1$ and if $\pi$ has size at least $2$,
     then ${{\varphi}}^{\pi}_{p_1}(0,1)={{\varphi}}^{\pi}_{p_1}(1,0)=0$.
\end{itemize}
In both cases, we can conclude that the function 
$\varphi^\pi_{p_1}(x,y) + \varphi^\pi_{p_1}(y,x)$ is not constant.
\end{proof}

We also note that
$|\varphi^\pi_{p_1}(x,y) - \varphi^\pi_{p_1}(x',y')|$ is bounded from above
by the probability that at least one of $P_1,\dots,P_{k-1}$ has its $x$-coordinate between $x$ and $x'$ or 
its $y$-coordinate between $y$ and $y'$. Using a union bound,   this implies
\[|\varphi^\pi_{p_1}(x,y) - \varphi^\pi_{p_1}(x,y)| \le (k-1) (|x-x'|+|y-y'|),\]
i.e.~$\varphi^\pi_{p_1}$ is $(k-1)$-Lipschitz when using the $L_1$ norm on $[0,1]$.

\subsection{A recursive inequality on the variance}
\label{ssec:recursive_variance}
Let $\la$ be a partition with at least one part equal to $2$ and denote by $\la'$ the partition obtained by
removing a part of length 2 to $\la$.
Then a uniform random permutation $\si=\si_\la$ in $\mathcal C_{\la}$ can be constructed as follows.
\begin{itemize}
  \item Choose two distinct indices $i$ and $j$ uniformly at random between $1$ and $n$,
    and set $\si_i=j$ and $\si_j=i$
  \item Take a uniform random permutation $\si'$ of $[n] \setminus \{i,j\}$ of cycle type $\la'$,
    independent of $\{i,j\}$, and set $\si_k=\si'_k$ for $k \notin \{i,j\}$.
\end{itemize}
Using the law of total variance, we have
\[v(\la):=\Var(X^{\pi}(\si)) = \esper\big[ \Var(X^{\pi}(\si)|i,j) \big]
    + \Var\big[ \esper( X^{\pi}(\si)|i,j) \big].\]
Moreover, the quantity $X^{\pi}(\si)$ can be decomposed 
as $X^{\pi}(\si) = X^{\pi}(\si') +C$, where
  $C$ is the number of occurrences of $\pi$ in $\si$,
    using position $i$ or $j$ (or both).
Note that $X^{\pi}(\si')$ is independent of $i,j$, 
so that $\esper(X^{\pi}(\si')|i,j)$ is the {\em constant} random variable
a.s.~equal to $\esper(X^{\pi}(\si'))$. This implies that
\[ \Var\big[ \esper(X^{\pi}(\si)|i,j) \big] 
= \Var\big[ \esper(X^{\pi}(\si')) + \esper(C|i,j) \big] = \Var\big[\esper(C|i,j) \big].\]
In particular, we get an initial bound
\begin{equation}
  v(\la) \ge \Var\big[\esper(C|i,j) \big].
  \label{eq:InitialBound}
\end{equation}
On the other hand, from Cauchy-Schwarz and Jensen inequalities,
since $X^{\pi}(\si')$ and $i,j$ are independent,
we have
\[ \Big| \esper\big[\Cov(X^{\pi}(\si'),C |i,j)\big] \Big|
\le \esper\big[ \sqrt{\Var(X^{\pi}(\si'))} \sqrt{\Var(C|i,j)} \big] 
\le \sqrt{\Var(X^{\pi}(\si'))} \, \sqrt{\esper[\Var(C|i,j) ]}.\]
Expanding $\Var(X^{\pi}(\si)|i,j)=\Var(X^{\pi}(\si')+C|i,j)$ by bilinearity, we find that
\begin{align*}
  \esper\big[ \Var(X^{\pi}(\si)|i,j) \big] &
= \Var(X^{\pi}(\si')) + 2\esper\big[\Cov(X^{\pi}(\si'),C |i,j)\big] + \esper\big[\Var(C|i,j)\big] \\
&\ge \Var(X^{\pi}(\si')) -2 \sqrt{\Var(X^{\pi}(\si'))} \sqrt{\esper[\Var(C|i,j)]}.
\end{align*}
Note that $\Var(X^{\pi}(\si'))=v(\la')$,
since $\si'$ is a uniform random permutation of cycle-type $\la'$.
Summing up, we get the following recursive inequality on $v(\la)$:
\begin{equation}
  v(\la) \ge v(\la') \, \bigg( 1- 2\sqrt{\tfrac{ \esper[\Var(C|i,j)]}{v(\la')} }\, \bigg)
  + \Var\big[\esper(C|i,j) \big]. 
  \label{eq:RecursiveInequality}
\end{equation}

\subsection{Analysing the initial bound \eqref{eq:InitialBound}}
\label{ssec:initial_bound}
Recall that $C$ counts the number of occurrences of $\pi$ in $\si$ that use position $i$ or $j$ (or both).
We first consider the number $C_1$ of those using $i$ but not $j$.
\begin{lemma}
  Let $x,y$ be in $[0,1]$ and $i=i(n)$ and $j=j(n)$ be two sequences chosen such that $i=xn+o(n)$ and $j=yn+o(n)$.
  Then we have
  \[ \esper(C_1|i,j) = \frac{1}{(k-1)!}n^{k-1} \varphi^\pi_{p_1}(x,y) +o(n^{k-1}),\]
  where $\varphi^\pi_{p_1}$ is defined in \cref{eq:def_varphi} and the error term is uniform on $i$ and $j$.
  \label{lem:EC1}
\end{lemma}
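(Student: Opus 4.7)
The plan is to reduce the computation of $\esper(C_1\mid i,j)$ to an asymptotic counting of $(k-1)$-tuples of positions, and then to identify the limit using the permuton
\[ \mu_\infty := p_1 \Delta + (1-p_1) \Lambda, \]
which is the scaling limit of the empirical graph of $\sigma_{\lambda'}$.

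First I would write $C_1$ as a sum of indicators over $(k-1)$-subsets $I = \{u_1, \ldots, u_{k-1}\}$ of $[n]\setminus\{i,j\}$: each such $I$ with $I \cup \{i\}$ forming an occurrence of $\pi$ in $\sigma$ contributes one to $C_1$. Taking expectation and letting $I$ be an independent uniform random $(k-1)$-subset of $[n]\setminus\{i,j\}$ yields
\[ \esper(C_1 \mid i, j) = \binom{n-2}{k-1} \cdot q_n(i,j), \]
where $q_n(i,j)$ is the joint probability (over $I$ and $\sigma'$) that $I\cup\{i\}$ is an occurrence of $\pi$ in $\sigma$. Since $\binom{n-2}{k-1} = \frac{n^{k-1}}{(k-1)!}(1 + O(1/n))$, it will suffice to show that $q_n(i,j) \to \varphi^\pi_{p_1}(x,y)$, uniformly in $i, j$ satisfying $i/n \to x$, $j/n \to y$.

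The next step is to recast the occurrence geometrically: setting $(x_n, y_n) := (i/n, j/n)$, the event at $I \cup \{i\}$ reduces to
\[ \Perm\big((u_1/n, \sigma'_{u_1}/n), \ldots, (u_{k-1}/n, \sigma'_{u_{k-1}}/n), (x_n, y_n)\big) = \pi, \]
with no coordinate ties since $\sigma'$ takes values in $[n]\setminus\{i,j\}$. Up to an $O(1/n)$ error coming from collisions, I can replace the uniform subset $I$ by $k-1$ i.i.d.\ uniform indices $U_1, \ldots, U_{k-1}$ in $[n]\setminus\{i,j\}$. The core analytic claim is then that the joint law of the $k-1$ random points $(U_s/n, \sigma'_{U_s}/n)$ converges to $\mu_\infty^{\otimes(k-1)}$; for $k-1$ fixed this follows from the convergence of expected joint pattern densities of $\sigma_{\lambda'}$, a standard calculation for uniform random permutations in conjugacy classes with $m_1(\lambda')/(n-2) \to p_1$ (see e.g.~\cite[Section 7]{hamaker}). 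Combined with the Lipschitz continuity of $\varphi^\pi_{p_1}$ established just before, this would give $q_n(i,j) = \varphi^\pi_{p_1}(x_n, y_n) + o(1) = \varphi^\pi_{p_1}(x, y) + o(1)$.

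The main obstacle will be obtaining this convergence uniformly in $(i,j)$: the expected pattern density convergence must be controlled with an error term independent of which two positions have been removed to form the ground set of $\sigma'$. Fortunately $\lambda'$ differs from $\lambda$ only by one part of size $2$, so the limiting permuton is unaltered, and the bounds on joint moments of the indicators $B_{i,j}$ (or equivalently, explicit pattern-density formulas in conjugacy classes) give an $O(1/n)$ error uniformly in $(i,j)$. Together with the estimate $|\varphi^\pi_{p_1}(x_n, y_n) - \varphi^\pi_{p_1}(x, y)| \leq (k-1)(|x_n - x| + |y_n - y|) = o(1)$ from the Lipschitz property, this produces the required uniform expansion.
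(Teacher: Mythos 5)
Your proposal is correct and follows essentially the same route as the paper: write $\esper(C_1\mid i,j)$ as (approximately) $\frac{n^{k-1}}{(k-1)!}$ times the probability that $k-1$ uniformly chosen positions together with $i$ realize the pattern $\pi$, identify the limit of that probability as $\varphi^\pi_{p_1}(x_n,y_n)$, and pass from $(x_n,y_n)$ to $(x,y)$ via the Lipschitz bound. The differences are a cosmetic one (you sum over unordered $(k-1)$-subsets and use $\binom{n-2}{k-1}$, while the paper sums over ordered $(k-1)$-tuples of positions together with their images $j_\ell=\sigma(i_\ell)$ and carries the symmetry factor $1/(k-1)!$) and one of self-containedness: the asymptotic joint law of $\sigma'$ at finitely many \emph{specified} positions, which you attribute to a citation, is exactly the heart of the argument and the paper proves it directly as \cref{lem:estimates-sigma-i1-im}. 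That lemma computes $\mathbb{P}(\forall s,\ \sigma_{i_s}=j_s)=p_1^r\big(\tfrac{1-p_1}{n}\big)^{m-r}(1+O(n^{-1}))$, with $r$ the number of $s$ with $i_s=j_s$, by combining the fact that the fixed-point set is a uniform subset of size $m_1(\lambda)$ with conjugation invariance of the non-fixed part, and it discards the problematic coincidences $\sigma_{i_s}=i_t$ ($s\ne t$) by a union bound. Note that this is slightly different from the position-averaged pattern densities of Hamaker's Section~7: one needs the joint law at \emph{fixed} positions, uniformly in those positions, in the deleted pair $(i,j)$, and in $\lambda$, and this is precisely where the uniformity claimed in the error term gets established. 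Your plan would go through, but this estimate deserves to be proved rather than cited.
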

We start with some estimates on the distribution of
finitely many values $(\sigma(i_1),\dots,\sigma(i_m))$
of a uniform random permutation $\sigma$ of cycle type $\la$.
\begin{lemma}
\label{lem:estimates-sigma-i1-im}
  \begin{enumerate}
    \item Fix $m \ge 1$ and let $i_1,\dots,i_m$ be distinct indices in $\{1,\dots,n\}$.
      Then 
      \[\mathbb{P}(\exists s \ne t : \sigma_{i_s} = i_t) =O(n^{-1}).\]
    \item Let $j_1,\dots,j_m$ also be distinct indices in $\{1,\dots,n\}$.
      We assume that there is no $s \ne t$ such that $i_s=j_t$ and we let $r=|\{t: i_t=j_t \}|$. Then
  \[ \mathbb{P}( \forall s \le m,\, \sigma_{i_s} = j_s ) = \left( p_1 \right)^r  \big(\tfrac{1-p_1}{n}\big)^{m-r}(1 +O(n^{-1})).\]  
  \end{enumerate}
  In both estimates, the error terms depend on $m$, 
  but are uniform in $(i_1,\dots,i_m,j_1,\dots,j_m)$ and in $\la$.
  Moreover, the same estimates hold conditionally to $i,j$ in the construction of \cref{ssec:recursive_variance}, provided that
 the tuple $(i_1,\dots,i_m,j_1,\dots,j_m)$ does not contain $i$ or $j$.
\end{lemma}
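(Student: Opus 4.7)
The plan is to prove part (ii) by a direct counting argument, after which part (i) will follow from a union bound and the conditional version will follow from the fact that the construction in \cref{ssec:recursive_variance} simply resamples on a smaller symmetric group. Throughout, I would work with the realization $\sigma = \tau^{-1} \rho \tau$ introduced in \cref{sec:WDG}, where $\rho$ is an arbitrary fixed element of $\mathcal{C}_{\lambda}$ and $\tau$ is uniform in $S_n$. Under this model the event $\sigma_{i_s} = j_s$ translates to $\rho(\tau(i_s)) = \tau(j_s)$.

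For part (ii), split the indices into $F = \{s : i_s = j_s\}$ of size $r$ and $N = \{s : i_s \ne j_s\}$ of size $m - r$. The constraints decouple: for $s \in F$, $\tau(i_s)$ must be a fixed point of $\rho$; for $s \in N$, the pair $(\tau(i_s), \tau(j_s))$ must be an arc $(a, \rho(a))$ of $\rho$ with $a$ non-fixed. The assumption $i_s \ne j_t$ for $s \ne t$, combined with the distinctness of the $i_s$'s and of the $j_s$'s, ensures that the $2m - r$ positions $\{i_s : s \in F\} \cup \{i_s, j_s : s \in N\}$ are pairwise distinct. I would then count valid $\tau$ as follows: assign $(\tau(i_s))_{s \in F}$ to distinct fixed points of $\rho$ in $(m_1)_r := m_1(m_1-1)\cdots(m_1 - r + 1)$ ways; assign the $m - r$ pairs to pairwise vertex-disjoint arcs of $\rho$ on its non-fixed points, which a greedy count shows can be done in $(n - m_1)^{m-r}(1 + O(n^{-1}))$ ways (at step $s$, at most $3(s-1)$ of the $n - m_1$ non-fixed elements are forbidden, namely the previously chosen sources and their $\rho$-images and $\rho$-preimages); finally, fill the remaining $n - (2m - r)$ positions arbitrarily in $(n - 2m + r)!$ ways. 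Dividing by $n!$ and using $(n)_{2m-r} = n^{2m-r}(1 + O(n^{-1}))$ together with $m_1/n = p_1 + O(n^{-1})$ (which holds for the sequence $\lambda^n$ fixed at the start of \cref{sec:non-degeneracy}) yields the stated estimate, with hidden constants depending only on $m$.

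Part (i) follows by applying part (ii) with $m = 2$ and $r = 0$ to each ordered pair $(s,t)$ with $s \ne t$, which gives $\mathbb{P}(\sigma_{i_s} = i_t) = O(n^{-1})$, and then summing over the $\binom{m}{2}$ pairs. For the conditional version, the key observation is that conditionally on $\{i, j\}$ the construction of \cref{ssec:recursive_variance} makes the restriction of $\sigma$ to $[n] \setminus \{i, j\}$ a uniform random permutation of cycle type $\lambda'$ on a set of size $n - 2$; when the tuple $(i_1, \dots, i_m, j_1, \dots, j_m)$ avoids $\{i, j\}$, the relevant events are measurable with respect to this restricted permutation, so the unconditional estimate applied with parameters $(n-2, \lambda')$ yields precisely the claim, since $\lambda'$ has the same number $m_1$ of fixed points and $m_1/(n-2) = p_1 + O(n^{-1})$. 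The main delicate step is the greedy count of vertex-disjoint arc tuples together with the verification that all error terms are uniform in the input tuple; everything else reduces to routine manipulations of falling factorials.
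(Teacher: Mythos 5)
Your proposal is correct, and it takes a genuinely different route from the paper. The paper proves part (i) first (directly, via a one-index computation and a union bound) and then uses it inside the proof of part (ii); there, it exploits the distributional fact that the fixed-point set of $\sigma$ is a uniform random subset of $[n]$ of size $m_1(\la)$ to compute $\mathbb P[i_1,\dots,i_r \text{ fixed}]$ and then $\mathbb P[i_{r+1},\dots,i_m \text{ not fixed}\mid \dots]$, adjoins the disjointness event from part (i), and finally invokes conjugation invariance to argue that $(\sigma_{i_{r+1}},\dots,\sigma_{i_m})$ is uniform over tuples of distinct values in $[n]\setminus\{i_1,\dots,i_m\}$. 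You instead prove (ii) first and in one shot by passing to the uniform conjugator $\tau$ (so $\sigma=\tau^{-1}\rho\tau$), turning the event into explicit constraints on $\tau$ (each fixed pair forces $\tau(i_s)$ to a fixed point of $\rho$, each non-fixed pair forces $(\tau(i_s),\tau(j_s))$ onto a vertex-disjoint arc of $\rho$) and then counting valid $\tau$ by a greedy product; (i) then follows from (ii) by a union bound. Your version replaces the paper's appeals to "uniform random subset" and "conjugation invariance" with a single explicit count on $\tau$, which is more elementary/combinatorial, at the cost of the greedy-arc bookkeeping; the paper's version is slightly shorter but relies on two distributional facts stated without proof. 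Note one small slip: to get $\mathbb P(\sigma_{i_s}=i_t)=O(n^{-1})$ from (ii) you should take $m=1$, $r=0$ (a single constraint), not $m=2$, $r=0$; and the greedy count genuinely requires $p_1<1$ so that $n-m_1=\Theta(n)$, which is available in the setting of Section 5 but worth saying explicitly. Both approaches share the same implicit convention for the case $p_1=0$, $r\ge 1$ (where the multiplicative error form should be read as an additive $O(n^{-(m-r+1)})$ error), so that is not a defect specific to your argument.
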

\begin{proof}
  For the first item, let us note that a given index $i_s$ is a fixed point of $\sigma$ with probability $m_1(\la)/n$.
  Conditionally of $i_s$ not being a fixed point, by conjugation invariance, its image $\sigma_{i_s}$ is uniformly distributed
  on $[n] \setminus \{i_s\}$, and thus for fixed $s \ne t$, we have
  \[ \mathbb{P}( \sigma_{i_s} = i_t ) = \frac{n-m_1(\la)}{n} \cdot \frac1{n-1} \le \frac{1}{n-1}.\]
  The first item in the lemma follows by a simple union bound.

Let us consider the second item.
By symmetry, we can assume that $i_s=j_s$ for $s \le r$ and $i_s \ne j_s$ for $s >r$.
Recalling that the set $F(\sigma)$  of fixed points of $\sigma$ is
 a uniform random subset of $[n]$ of size $m_1(\la)$, we have
\[ \mathbb P \big[  \forall s \le r,\, \sigma(i_s) =i_s \big] = \mathbb P\big[ \{i_1,\dots,i_r\} \subset F(\sigma) \big]
= \frac{\binom{n-r}{m_1-r}}{\binom{n}{m_1}} =  p_1^r +O(n^{-1}). \]
Now, conditionally on $i_1,\dots,i_r$ being fixed points, the probability that
$i_{r+1},\dots,i_m$ are {\bf not} fixed points is
\[ \frac{\binom{n-m}{m_1-r}}{\binom{n-r}{m_1-r}}  = (1-p_1)^{m-r} + O(n^{-1}).  \]
Combining this computation with item i), the probability of $i_1,\dots,i_r$ being fixed points, 
of $i_{r+1},\dots,i_m$ being not fixed points
and of $(\sigma_{i_{r+1}},\dots,\sigma_{i_m})$ being disjoint from $(i_{r+1},\dots,i_m)$ is
\[ p_1^r  (1-p_1)^{m-r} + O(n^{-1}).\]
But under this event,  by conjugation invariance, the tuple $(\sigma_{i_{r+1}},\dots,\sigma_{i_m})$ is a uniform random tuple of distinct values
in $[n] \setminus \{ i_1,\dots,i_m\}$. Hence, the conditional probability that is equal to $(j_{r+1},\cdots,j_m)$
is $1/n^{m-r} (1+O(n^{-1}))$.
We finally get
\[
\mathbb{P}( \forall s \le m,\, \sigma(i_s) = j_s ) = \frac{p_1^r  (1-p_1)^{m-r} + O(n^{-1})}{n^{m-r}(1 +O(n^{-1}))},\]
which proves the second item.

The conditional statement in the lemma follows easily from the unconditional
one since for $h \notin \{i,j\}$, we have $\sigma_h=\sigma'_h$,
where $\sigma'$ is a uniform random permutation of cycle-type $\lambda'$
of the set $[n] \setminus \{i,j\}$.
\end{proof}

\begin{proof}[Proof of \cref{lem:EC1}]
Recall that $C_1$ is the number of occurrences of $\pi$ in $\sigma$ using position
$i$ but not $j$. 
Denoting by $i_1,\dots,i_{k-1}$ the other positions of such an occurrence of $\pi$
(in any order, hence the symmetry factor in the next formula)
and $j_1,\dots,j_{k-1}$ their images by $\sigma$,
we can write $C_1$ as follows:
\begin{multline}C_1 =\frac{1}{(k-1)!}\sum_{i_1,\dots,i_{k-1} \in [n]\setminus\{i,j\}\atop{\text{distinct}}}
\sum_{j_1,\ldots,j_{k-1}\in [n]\setminus \{i,j\} \atop {\text{distinct}}}
\Bigg( \mathbbm{1}\Big[\Perm\big((\tfrac{i_1}{n},\tfrac{j_1}{n}),\dots, (\tfrac{i_{k-1}}{n},\tfrac{j_{k-1}}{n}), (\tfrac{i}{n},\tfrac{j}{n})\big)=\pi \Big] \\
   \cdot \mathbbm{1}\big[\sigma(i_\ell)=j_\ell, \forall 1\le \ell\le k-1) \big] \Bigg)\,. 
   \label{eq:decompo-C1}
   \end{multline}
   By \cref{lem:estimates-sigma-i1-im}, item i),
   for each $i_1,\dots,i_{k-1}$, the total contribution of terms with some $j_t \ne i_t$,
   but $j_t \in \{i_1,\dots,i_{k-1},i\}$, is $O(n^{-1})$ in expectation.
   Hence, the total contribution of such terms to the double sum in \eqref{eq:decompo-C1}
   is $O(n^{k-2})$ in expectation.
   
   Furthermore, from \cref{lem:estimates-sigma-i1-im}, item ii), if $(j_1,\dots,j_{k-1})$ is such that for each $t$, either $j_t=i_t$
   or $j_t \notin \{i_1,\dots,i_{k-1},i\}$, we have, conditionally to $i,j$,
  $$ \mathbb P\big[\sigma(i_\ell)=j_\ell, \forall 1\le \ell\le k-1 \mid i,j \big]= (p_1)^{\#\{\ell:i_\ell=j_\ell\}}  \big(\tfrac{1-p_1}{n}\big)^{\#\{\ell:i_\ell\neq j_\ell\}} (1+O(\frac{1}{n})).$$
This gives
\begin{multline}\mathbb{E}(C_1|i,j)=\sum_{i_1,\dots,i_{k-1} \in [n]\setminus\{i,j\}\atop{\text{distinct}}}
\sum_{j_1,\ldots,j_{k-1}\in [n]\setminus \{i,j\} \atop {\text{distinct}}}\Bigg( \mathbbm{1}\Big[\Perm\big((\tfrac{i_1}{n},\tfrac{j_1}{n}),\dots, (\tfrac{i_{k-1}}{n},\tfrac{j_{k-1}}{n}), (\tfrac{i}{n},\tfrac{j}{n})\big)=\pi \Big] \\
   \cdot (p_1)^{\#\{\ell:i_\ell=j_\ell\}} \big(\tfrac{1-p_1}{n}\big)^{\#\{\ell:i_\ell\neq j_\ell\}}  \Bigg) +O(n^{k-2}).  
   \label{eq:C1_ij}
   \end{multline}
   We let  $(P^{(n)}_i)_{i \le k-1}$ be 
 a vector of independent random variables with the following distribution, conditioned not to share coordinates:
\begin{itemize}
\item with probability $p_1$, we set $P^{(n)}_1=(h/n,h/n)$, where $h$ is uniform in $[n] \setminus \{i,j\}$;
\item with probability $1-p_1$, we set $P^{(n)}_1=(h/n,h'/n)$, where $h,h'$ are uniform in $[n] \setminus \{i,j\}$ conditioned to satisfy $h \ne h'$;
\end{itemize}
Using this, \eqref{eq:C1_ij} rewrites as
\[\mathbb{E}(C_1|i,j)=\frac{1}{(k-1)!} n^{k-1} \mathbb P\big[ \Perm\big(P^{(n)}_1,\dots, P^{(n)}_{k-1},(\tfrac{i}{n},\tfrac{j}{n})\big)=\pi \big] +O(n^{k-2}).\]
But the tuple $\big(P^{(n)}_1,\dots, P^{(n)}_{k-1},(\tfrac{i}{n},\tfrac{j}{n})\big)$
converges in distribution to $\big(P_1,\dots, P_{k-1},(x,y)\big)$ as in \cref{eq:def_varphi}.
Since $\big(P_1,\dots, P_{k-1},(x,y)\big)$ have distinct coordinates with probability 1, and
since the map $\One[\Perm\big(\cdot,\cdots,\cdot\big)=\pi]$ is continuous on the set of $k$-tuples of points
with distinct coordinates, we get
\[\mathbb{E}(C_1|i,j)=\frac{1}{(k-1)!} n^{k-1} \mathbb P\big[ \Perm\big(P_1,\dots, P_{k-1},(x,y)\big)=\pi \big] +o(n^{k-1}).\]
It is easy to check that the error term is uniform in $i,j$, concluding the proof of the lemma.
\end{proof}

Symmetrically, letting $C_2$ be the number of occurrences of $\pi$ in $\si$ that use
the position $j$ but not $i$, we have
\[ \esper(C_2|i,j) = \frac{n^{k-1}}{(k-1)!} \varphi_{p_1}^\pi(y,x) +o(n^{k-1}).\]
Moreover, it is easy to see that there are less than $n^{k-2}$ occurrences of $\pi$
in $\si$ using simultaneously $i$ and $j$, so that
$C=C_1+C_2 + O(n^{k-2})$. Summing up, we get that, provided that $i=xn+O(1)$ and $j=yn+O(1)$,
\begin{equation}\label{eq:C_ij}
  \esper(C|i,j) = \frac{n^{k-1}}{(k-1)!} \big(\varphi_{p_1}^\pi(x,y)+\varphi_{p_1}^\pi(y,x) \big) + o(n^{k-1}).
\end{equation}
Since $i$,$j$ are a uniform random pair of distinct integers in $[n]$,
one can couple them with i.i.d.~random uniform variables $(U,V)$ 
in $[0,1]$ such that $i=Un+O(1)$ and $j=Vn+O(1)$.
Hence,
\[  \Var\big[\esper(C|i,j) \big] = \frac{n^{2k-2}}{( (k-1)!)^2} \Var\big( \varphi_{p_1}^\pi(U,V)+ \varphi_{p_1}^\pi(V,U) \big)+o(n^{2k-2}).\]
Using \cref{lem:varphi_non_constant} and the fact that $\varphi_{p_1}^\pi$ is $(k-1)$-Lipschitz,
we know that 
\[K_2:= \Var( \varphi_{p_1}^\pi(V,U)+ \varphi_{p_1}^\pi(U,V))>0.\]
Setting $K'_2=K_2/(k!)^2$,
\cref{eq:InitialBound} implies that for any $n$ sufficiently large,
\begin{equation}
  v(\la) \ge \Var\big[\esper(C|i,j) \big] \ge  {K'_2} {n^{2k-2}}.
  \label{eq:InitialBoundExplicit}
\end{equation}

\subsection{Improving the initial lower bound}~
Suppose that $p_2\neq0$.
\label{ssec:recursive_ineq}
The lower bound \eqref{eq:InitialBoundExplicit},
is not sufficient to  prove that the limiting variance does not vanish.
We will use the recursive inequality \cref{eq:RecursiveInequality} to improve it.
To this end, we first need to analyse the term $\esper\big[\Var(C|i,j)\big]$.

\begin{lemma}
There exist $\Kb,\Kbb>0$ such that for any $n>\Kb$,
\[\esper\big[\Var(C|i,j)\big] 
\le \Kbb n^{2k-3} .\]
  \label{lem:Tech}
\end{lemma}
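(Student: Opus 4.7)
The plan is to establish a uniform bound $\Var(C|i,j) \le K\, n^{2k-3}$ over all pairs $i \ne j$, from which the lemma follows by taking expectation.

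First, I would decompose $C = C_1 + C_2 + C_{12}$, where $C_{12}$ counts occurrences of $\pi$ using \emph{both} positions $i$ and $j$. Since any such occurrence is determined by its other $k-2$ positions, we have $C_{12} \le K_0\, n^{k-2}$ almost surely, hence $\Var(C_{12}|i,j) = O(n^{2k-4})$. Combined with the trivial bound $C_1 + C_2 = O(n^{k-1})$ and Cauchy--Schwarz, we obtain $|\Cov(C_1+C_2, C_{12}|i,j)| = O(n^{2k-3})$, so it suffices to bound $\Var(C_1+C_2|i,j)$.

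Next, as in the proof of \cref{lem:EC1}, I would write $C_1 + C_2 = \sum_{T \in \mathcal{T}^*} X_T$, where $T$ ranges over admissible tuples $((h_1, h'_1), \dots, (h_{k-1}, h'_{k-1}))$ of distinct positions and distinct values in $[n] \setminus \{i, j\}$ which, together with $(i,j)$ or $(j,i)$, form an occurrence of $\pi$, and $X_T = \prod_{t=1}^{k-1} \mathbbm{1}[\sigma_{h_t} = h'_t]$. One has $|\mathcal{T}^*| = O(n^{2k-2})$ and, by \cref{lem:estimates-sigma-i1-im}\,(ii), $\esper(X_T|i,j) = O(n^{-(k-1)})$. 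I would then expand
\[
\esper\bigl((C_1+C_2)^2 \mid i,j\bigr) = \sum_{(T,T') \in (\mathcal{T}^*)^2} \esper(X_T X_{T'} \mid i,j),
\]
and split the sum according to the overlap pattern of $T$ and $T'$. For pairs $(T, T')$ whose supports (on both the position and value sides) are disjoint, \cref{lem:estimates-sigma-i1-im}\,(ii) yields the near-factorization $\esper(X_T X_{T'}|i,j) = \esper(X_T|i,j)\,\esper(X_{T'}|i,j)\bigl(1 + O(n^{-1})\bigr)$; summing and comparing with $(\esper(C_1+C_2|i,j))^2 = O(n^{2k-2})$ leaves an error of $O(n^{2k-3})$. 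For pairs with nontrivial overlap, injectivity of $\sigma$ forces shared positions to carry shared values (and conversely), so only pairs sharing $\ell \ge 1$ aligned position--value pairs contribute; an elementary count gives $O(n^{4k-4-2\ell})$ such pairs, each with joint probability $O(n^{-(2k-2-\ell)})$ by \cref{lem:estimates-sigma-i1-im}\,(ii), so their total contribution to both $\esper((C_1+C_2)^2|i,j)$ and $(\esper(C_1+C_2|i,j))^2$ is $\sum_{\ell \ge 1} O(n^{2k-2-\ell}) = O(n^{2k-3})$. Combining these two estimates yields $\Var(C_1+C_2|i,j) = O(n^{2k-3})$ uniformly in $i, j$.

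The main obstacle lies in the combinatorial bookkeeping of the overlap analysis: beyond the ``aligned'' overlaps between $T$ and $T'$, one has to handle degenerate configurations in which a position of one tuple equals a value of the other, or coincides with $i$ or $j$ on the forbidden side, which fall outside the generic hypotheses of \cref{lem:estimates-sigma-i1-im}\,(ii). Each additional such coincidence constrains one more coordinate, so the residual contributions are again $O(n^{2k-3})$ and are absorbed by the main estimates above; at that point, the uniform bound on $\Var(C|i,j)$ is in hand and the lemma follows.
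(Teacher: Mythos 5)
Your proposal is a genuine alternative to the paper's argument. The paper treats the conditional distribution of $\sigma$ given $i,j$ as a uniform permutation of $[n]\setminus\{i,j\}$ with cycle type $\lambda'$, observes that the family of products $\prod_{t\le k-1} B_{i_t,j_t}$ inherits a $(\bm\Psi,\bm D_{k-1})$ weighted dependency graph by \cref{prop:WDG_conjugacy} and \cref{prop:WDG_Products}, and then re-runs the bounds on $R$ and $T_\ell$ from \cref{sec:proof clt} (with $k$ replaced by $k-1$) to deduce $\Var(C|i,j)\le K_4 n^{2k-3}$ directly from \cref{lem:BorneCumulant} applied at $r=2$. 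You instead do a direct second-moment computation: split off $C_{12}$ (negligible since it is bounded by $O(n^{k-2})$ a.s.), expand $\esper((C_1+C_2)^2|i,j)$ as a double sum over pairs $(T,T')$ of admissible index tuples, use \cref{lem:estimates-sigma-i1-im}\,(ii) to get near-factorization $\esper(X_T X_{T'})=\esper(X_T)\esper(X_{T'})(1+O(n^{-1}))$ when the supports are disjoint, and control overlapping pairs by a counting argument. Both approaches are valid; the paper's reuses the WDG machinery already built for the CLT (so the bookkeeping is packaged into $R$ and $T_\ell$), while yours is more elementary and self-contained but puts the combinatorial load on the overlap analysis. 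Note, however, that the step you flag as ``the main obstacle'' — the degenerate configurations where a position of one tuple coincides with a value of the other (including within a single $T$, where $j_t\in\{i_1,\dots,i_{k-1}\}$ is allowed), so that the hypothesis ``no $s\ne t$ with $i_s=j_t$'' of \cref{lem:estimates-sigma-i1-im}\,(ii) fails — is precisely the kind of systematic bookkeeping that $\Psi$ and the graph-theoretic lemmas of \cref{sec:WDG} automate; if you wanted to make your sketch fully rigorous, you would essentially have to reconstruct that analysis by hand. As it stands, the claim that ``each additional coincidence constrains one more coordinate'' is plausible but left as an assertion rather than proved.
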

\begin{proof}
Conditionally on $i,j$, we have
\begin{multline*}
  C 
  = \sum_{r=1}^k \, \sum_{i_1 < \dots  <i_k \in [n]\setminus\{j\} \atop i_r=i} \,
    \sum_{j_{\pi^{-1}(1)} < \dots < j_{\pi^{-1}(k)} \in [n]\setminus\{i\}  \atop j_r =j}
    {B_{i_r,j_r}} \prod_{\ell\neq r}
  B_{i_\ell,j_\ell} 
  \\+  
  \sum_{r=1}^k \, \sum_{i_1 < \dots  <i_k \in [n]\setminus\{i\} \atop i_r=j} \,
    \sum_{j_{\pi^{-1}(1)} < \dots < j_{\pi^{-1}(k)} \in [n]\setminus\{j\}  \atop j_r =i}
    {B_{i_r,j_r}} \prod_{\ell\neq r}
  B_{i_\ell,j_\ell}
\\+
\sum_{r_1=1}^k \sum_{r_2=1\atop r_2\neq r_1}^k \, \sum_{i_1 < \dots  <i_k \in [n] \atop i_{r_1}=i,i_{r_2}=j} \,
    \sum_{j_{\pi^{-1}(1)} < \dots < j_{\pi^{-1}(k)} \in [n]  \atop j_{r_1}=j,j_{r_2}=i
    }
 {B_{i_{r_1},j_{r_1}}}  {B_{i_{r_2},j_{r_2}}} \prod_{\ell\notin \{r_1,r_2\}}
  B_{i_\ell,j_\ell}.
\end{multline*}
Conditionally on $i,j$, we have, in the two first term $B_{i_r,j_r}=B_{i,j}=1$,
and in the third one $B_{i_{r_1},j_{r_1}}=B_{i,j}=1=B_{j,i}=B_{i_{r_2},j_{r_2}}$
The other $B$ factors have indices different from $i$ and $j$.
Since $\sigma'$
is a uniform permutation on the conjugacy class indexed by $\lambda'$
we have 
$(\bm{\Psi}, \bm D_{k-1} )$ weighted dependency graph
 for the random variables $\prod_{t=1}^{k-1} B_{i_t,j_t}$.

One can easily adapt the proof of \cref{sec:proof clt} to obtain that the quantities $R$ and $T_\ell$
associated with the above dependency graph satisfy $R\leq 3 2^{2(k-1)\choose 2} (n-2)^{k-1} $ and $T_\ell\leq K_{7,k} n^{k-2}$
(note that the product $\prod_{t=1}^{k-1} B_{i_t,j_t}$ under consideration have degree $k-1$).
From \cref{lem:BorneCumulant}, this implies $\Var(C|i,j) \leq K_4 n^{2k-3}$.
One can conclude since $K_4$ is independent of the choice of $i$ and $j$. 
\end{proof}
Plugging in \cref{eq:InitialBoundExplicit,lem:Tech} 
in \cref{eq:RecursiveInequality}, 
we get that, for some constant $\Kc>0$,
\begin{equation}
   v(\lambda) \ge v(\lambda') ( 1 - \Kc n^{-1/2})+\Kab n^{2k-2}. 
   \label{eq:RecursiveInequalityExplicit}
 \end{equation}
For, $i\le m_2(\lambda)$, we denote by $\lambda^{(i)}=((\lambda')\cdots)'$ the partition obtained by removing
$i$ blocks of size $2$ from $\lambda$.
For $i \le \Kd n^{1/2}$ (where $\Kd$ is a positive constant),
the inequality \eqref{eq:RecursiveInequalityExplicit} holds substituting $\la$ by $\la^{(i)}$ and we have
\[ v(\lambda^{(i)}) \ge v(\lambda^{(i+1)}) ( 1 - \Kc (n-2i)^{-1/2})+\Kab (n-2i)^{2k-2}.\]
We start from the initial inequality \eqref{eq:InitialBoundExplicit},
namely $v(\la^{(\Kd \sqrt{n})}) \ge \Kab (n-\Kd \sqrt{n})^{2k-2}$,
and iterate the above recursive inequality: we get
\[ v(\lambda) \ge \sum_{i=0}^{\Kd \sqrt{n}-1} \Kab (n-2i)^{2k-2} (1 - \Kc (n-2i)^{-1/2}) \cdots (1 - \Kc n^{-1/2})\]

For $\Kd$ sufficiently small, e.g.~$K_6=(2K_5)^{-1}$, all products $$(1 - \Kc (n-2i)^{-1/2}) \cdots (1 - \Kc n^{-1/2})$$
are bounded away from $0$, so that each of the $\Theta(\sqrt{n})$ terms in the sum
behaves as $\Theta(n^{2k-2})$.
Therefore, there exists a constant $\Kv>0$ such that
\begin{equation}
   v(\lambda) \ge \Kv n^{2k-3/2}.
   \label{eq:FinalInequalityVaraince}
 \end{equation}
Comparing with \eqref{eq:VarianceHamaker}, we conclude that $f(p_1)+p_2g(p_1)>0$.

\subsection{The case without cycles of size 2}~
 \label{ssec:no_cycle2}
When $p_2=0$ and $p_1<1$, the proof can be easily adapted. 
In such cases, one has $p_3=\tfrac13(1-p_1-2p_2)>0$. 
In particular, for $n>\frac{3}{1-p_1}$,   $\lambda$ contains a block of size $3$,
and we let $\hat{\lambda}$ be the partition obtained by removing a block of size $3$ from $\lambda$. One can construct $\sigma_\lambda$ as follows :
\begin{itemize}
    \item Choose  $i,j,h$ distinct  uniformly at random in $[n]$ and set $\sigma_i=j$, $\sigma_j=h$  and $\sigma_h=i$.
    \item Take a uniform random permutation $\hat{\sigma}$ of $[n]\setminus\{i,j,h\}$ of cycle type $\hat{\lambda}$ and set $\sigma_\ell=\hat{\sigma}_\ell$ for any $\ell\in [n]\setminus\{i,j,h\}$
\end{itemize}
Let $\widehat{C}$ be the number of occurrences of $\pi$ that contains $i$ or $j$ or $h$. 
The counterpart of \eqref{eq:C_ij} is the following.
\begin{lemma}
  Let $x,y,z$ be in $[0,1]$ and $i=i(n)$, $j=j(n)$ and $h=h(n)$ be three sequences of positive integers chosen such that $i=xn+o(n)$, $j=yn+o(n)$ and $h=zn+o(n)$.
  Then
  \[ \esper(\widehat{C}|i,j,h) = \frac{n^{k-1}}{(k-1)!} \big(\varphi_{p_1}^\pi(x,y) +\varphi_{p_1}^\pi(y,z) + \varphi_{p_1}^\pi(z,x)\big) +o(n^{k-1}),\]
  where $\varphi_{p_1}^\pi$ is defined in \cref{eq:def_varphi}.
\end{lemma}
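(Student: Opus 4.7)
The plan is to adapt directly the proof of Lemma \ref{lem:EC1} (and equation \eqref{eq:C_ij}) to the $3$-cycle setting. The first step is to decompose $\widehat{C}$ according to which of the three distinguished positions is the \emph{smallest} index of the occurrence among $\{i,j,h\}$, or more conveniently, to write $\widehat{C} = \widehat{C}_1 + \widehat{C}_2 + \widehat{C}_3 + R$, where $\widehat{C}_1$ counts occurrences containing position $i$ but not $j,h$; $\widehat{C}_2$ counts occurrences containing $j$ but not $i,h$; $\widehat{C}_3$ counts occurrences containing $h$ but not $i,j$; and $R$ gathers the occurrences that contain at least two of $i,j,h$. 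A trivial counting argument (fixing two positions among $k$ and choosing the remaining $k-2$ freely) shows $R = O(n^{k-2})$ deterministically, so $\esper(R|i,j,h)=O(n^{k-2})=o(n^{k-1})$.

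The core of the proof is to handle each of $\widehat{C}_1, \widehat{C}_2, \widehat{C}_3$ by reproducing almost verbatim the computation of Lemma \ref{lem:EC1}. Take $\widehat{C}_1$ for example. An occurrence of $\pi$ containing position $i$ (and excluding $j, h$) involves the point $(i/n, j/n) \approx (x,y)$ together with $k-1$ other points $(i_\ell/n, j_\ell/n)$, where the indices $i_\ell, j_\ell$ lie in $[n]\setminus\{i,j,h\}$ and their pairing is determined by the restricted permutation $\hat{\sigma}$. Now $\hat{\sigma}$ has cycle type $\hat{\lambda}$, and removing a $3$-cycle leaves $m_1(\hat{\lambda})=m_1(\lambda)$ unchanged, so the proportion of fixed points of $\hat{\sigma}$ is still $p_1 + O(1/n)$. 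Hence the two-part estimate of \cref{lem:estimates-sigma-i1-im} applies to $\hat{\sigma}$ with the same $p_1$, and a line-by-line repetition of the argument leading to \eqref{eq:C_ij} yields
\[
\esper(\widehat{C}_1 \mid i,j,h) = \frac{n^{k-1}}{(k-1)!}\, \varphi^\pi_{p_1}(x,y) + o(n^{k-1}),
\]
uniformly in $(i,j,h)$ satisfying the given asymptotics.

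The same argument, applied to $\widehat{C}_2$ (where the distinguished point is $(j/n, h/n) \approx (y,z)$, since $\sigma_j = h$) and to $\widehat{C}_3$ (distinguished point $(h/n,i/n) \approx (z,x)$), yields
\[
\esper(\widehat{C}_2 \mid i,j,h) = \frac{n^{k-1}}{(k-1)!}\, \varphi^\pi_{p_1}(y,z) + o(n^{k-1}), \qquad \esper(\widehat{C}_3 \mid i,j,h) = \frac{n^{k-1}}{(k-1)!}\, \varphi^\pi_{p_1}(z,x) + o(n^{k-1}).
\]
Summing these three contributions together with $\esper(R|i,j,h)=o(n^{k-1})$ delivers the claimed formula.

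I do not expect a genuine obstacle here: the statement is essentially the natural analogue of \eqref{eq:C_ij} for a $3$-cycle, and the only thing to check carefully is that the proof of \cref{lem:EC1} goes through when one conditions on a $3$-cycle instead of a $2$-cycle. The key point, already noted above, is that removing a $3$-cycle preserves $m_1$ so that the limiting fixed-point proportion of $\hat{\sigma}$ remains $p_1$; everything else (the reduction to tuples $(i_1,\dots,i_{k-1})$ and $(j_1,\dots,j_{k-1})$ with distinct entries, the bound on configurations where some $j_\ell$ lies in $\{i_1,\dots,i_{k-1}\}$, and the weak-convergence argument identifying the limit with $\varphi^\pi_{p_1}$) is formally identical.
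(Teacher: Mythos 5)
Your proposal is correct and is exactly the adaptation the paper has in mind (the paper omits the proof, presenting the lemma as the ``counterpart'' of \eqref{eq:C_ij}). The decomposition into $\widehat{C}_1,\widehat{C}_2,\widehat{C}_3$ plus an $O(n^{k-2})$ remainder, the identification of the distinguished points $(i/n,j/n)$, $(j/n,h/n)$, $(h/n,i/n)$ from the cycle structure $\sigma_i=j$, $\sigma_j=h$, $\sigma_h=i$, and the observation that removing a 3-cycle preserves $m_1(\lambda)$ so that \cref{lem:estimates-sigma-i1-im} applies with the same $p_1$, are all precisely the steps needed.
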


The remainder of the proof is the same.
One need only to check the following variant of \cref{lem:varphi_non_constant}.
\begin{lemma}
  Assume $\pi$ has size at least $2$ and $p_1 <1$. Then the function
 $$f: (x,y,z) \mapsto \varphi_{p_1}^\pi (x,y) + \varphi_{p_1}^\pi (y,z) + \varphi_{p_1}^\pi (z,x)$$ is not constant. 
\end{lemma}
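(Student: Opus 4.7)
The plan is to argue by contradiction, reducing to the two-variable non-constancy already established in \cref{lem:varphi_non_constant}. Suppose for contradiction that $f(x,y,z)$ equals a constant $c$ on $[0,1]^3$. The function $f$ has a cyclic structure in its three arguments, and the key idea is to exploit two different specializations of the third argument in order to cancel the asymmetric terms and reduce to a symmetric two-variable identity.

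Concretely, I would set $z=y$ and then $z=x$ in the identity $f(x,y,z)=c$. Both resulting equalities contain the symmetric block $\varphi^\pi_{p_1}(x,y)+\varphi^\pi_{p_1}(y,x)$, and their difference immediately yields $\varphi^\pi_{p_1}(x,x)=\varphi^\pi_{p_1}(y,y)$ for all $x,y\in[0,1]$, i.e.~$\varphi^\pi_{p_1}$ is constant along the diagonal. Plugging this back into either of the two specialized identities then forces $\varphi^\pi_{p_1}(x,y)+\varphi^\pi_{p_1}(y,x)$ to be constant on $[0,1]^2$, which directly contradicts \cref{lem:varphi_non_constant}, whose hypotheses $|\pi|\ge 2$ and $p_1<1$ are inherited from the statement.

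I do not foresee any serious obstacle, since all the real content is already packaged inside \cref{lem:varphi_non_constant} and the step above is a two-line symmetrization. A less elegant alternative would be to evaluate $f$ at two explicit points such as $(0,0,0)$ and $(0,\tfrac12,0)$ (respectively $(0,1,0)$), splitting into the cases $\pi_1\ne 1$ and $\pi_1=1$ exactly as in the proof of \cref{lem:varphi_non_constant}; but the contradiction route avoids any case analysis and makes the logical dependence on the previous lemma transparent.
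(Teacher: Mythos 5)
Your argument is correct and essentially the same as the paper's: both proceed by contradiction, specialize $f$ along $z=y$ (resp.\ a second diagonal specialization, and indeed $f(x,y,x)=f(x,x,y)$ by the cyclic structure), deduce that $x\mapsto\varphi^\pi_{p_1}(x,x)$ is constant, and then subtract it off to show $\varphi^\pi_{p_1}(x,y)+\varphi^\pi_{p_1}(y,x)$ is constant, contradicting \cref{lem:varphi_non_constant}.
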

\begin{proof}
 Assume, for the sake of contradiction, that $f$ is constant.
Then $f(x, y, y) = f(x, x, y)$ for any $x$ and $y$.
This implies that $\varphi_{p_1}^\pi(x, x) = \varphi_{p_1}^\pi(y, y)$ for any $x$ and $y$,
i.e.~that the function $x \mapsto \varphi_{p_1}^\pi(x, x)$ is constant.
As a consequence, the expression $$(x, y) \mapsto f(x, x, y) - \varphi_{p_1}^\pi(x, x) = \varphi_{p_1}^\pi(x, y) + \varphi_{p_1}^\pi(y, x)$$ should also be constant,
leading to a contradiction with \cref{lem:varphi_non_constant}.
\end{proof}

\textbf{Acknowledgement:}
Both authors want to thank Zachary Hamaker and Victor Dubach for insightful discussions and
an anonymous referee for their careful reading of the article and their 
insightful comments.

Mohamed Slim Kammoun is supported by ERC Project LDRAM: ERC-2019-ADG Project 884584. 
Valentin Féray is partially supported by a {\em Future Leader} grant
from the initiative {\em Lorraine Université d'Excellence} (LUE).
\def\cprime{$'$}

\end{document}